\documentclass{article}

\usepackage[english]{babel}
\usepackage{geometry}
\geometry{
body={6.6in, 9in},
left=1in,
top=1in
}

\usepackage[round]{natbib}

\newcommand{\footremember}[2]{%
    \footnote{#2}
    \newcounter{#1}
    \setcounter{#1}{\value{footnote}}%
}
\newcommand{\footrecall}[1]{%
    \footnotemark[\value{#1}]%
}

\usepackage[utf8]{inputenc} % allow utf-8 input
\usepackage[T1]{fontenc}    % use 8-bit T1 fonts
\usepackage{color}
\usepackage{xcolor}         % colors
\definecolor{mydarkblue}{rgb}{0,0.08,0.45}
\usepackage[colorlinks,urlcolor=mydarkblue,citecolor=mydarkblue,linkcolor=mydarkblue]{hyperref}       % hyperlinks
\usepackage{url}            % simple URL typesetting
\usepackage{booktabs}       % professional-quality tables
\usepackage{amsfonts}       % blackboard math symbols
\usepackage{nicefrac}       % compact symbols for 1/2, etc.
\usepackage{microtype}      % microtypography
\usepackage{subfig}
% \allowdisplaybreaks[4]

\usepackage{amsmath,bm}
% comment out the font for arxiv; iclr is times
% \usepackage{times}
\usepackage{amssymb}
\usepackage{amsthm}
\usepackage{graphicx}
\usepackage{caption}
\usepackage{comment}
\usepackage{enumitem}
\usepackage{mathtools}
\usepackage{thmtools, thm-restate}
\usepackage{algorithm}
\usepackage[noend]{algorithmic}
\usepackage{verbatim}
\usepackage{cleveref}

\theoremstyle{plain} \numberwithin{equation}{section}
\newtheorem{theorem}{Theorem}[section]
\numberwithin{theorem}{section}

\newtheorem{lemma}[theorem]{Lemma}

\theoremstyle{definition}

\newtheorem{assumption}{Assumption}

%%%%%% LaTeX shortcuts

%%%%%% Custon packages
\usepackage{bm, bbm}
\usepackage{booktabs}
\usepackage{tikz}

%%%%%% Custom macros

% Common symbols

% Eric's custom macros
  % Absolute value
  % Norm symbol
  % Inner product
  % Set
  % Parentheses
  % Brackets
% \newcommand\numberthis{\addtocounter{equation}{1}\tag{\theequation}}  % Equation numbering

% Xufeng's customs macros

\newcommand{\innp}[1]{\left\langle#1\right\rangle}  % Inner product

\usepackage{xcolor}
\definecolor{mypurple}{rgb}{0.67, 0.12, 0.47}

%%%%%%%%%% Below is taken from Good Fellow et al's DL Book.
%%%%%%%%%% https://raw.githubusercontent.com/goodfeli/dlbook_notation/master/math_commands.tex

%%%%% NEW MATH DEFINITIONS %%%%%

% Mark sections of captions for referring to divisions of figures

% Highlight a newly defined term

\def\1{\bm{1}}

% Random variables

% rm is already a command, just don't name any random variables m

% Random vectors

% Elements of random vectors

% Random matrices

% Elements of random matrices

% Vectors

\def\vg{{\bm{g}}}

\def\vx{{\bm{x}}}
\def\vy{{\bm{y}}}
\def\vz{{\bm{z}}}

% Elements of vectors

% Matrix

% Tensor
\DeclareMathAlphabet{\mathsfit}{\encodingdefault}{\sfdefault}{m}{sl}
\SetMathAlphabet{\mathsfit}{bold}{\encodingdefault}{\sfdefault}{bx}{n}

% Graph

\def\gF{{\mathcal{F}}}

\def\gO{{\mathcal{O}}}

\def\gT{{\mathcal{T}}}

% Sets

% Don't use a set called E, because this would be the same as our symbol
% for expectation.

\def\sR{{\mathbb{R}}}

% Entries of a matrix

% entries of a tensor
% Same font as tensor, without \bm wrapper

% The true underlying data generating distribution

% The empirical distribution defined by the training set

% The model distribution

% Stochastic autoencoder distributions

 % Laplace distribution

\newcommand{\E}{\mathbb{E}}

\newcommand{\R}{\mathbb{R}}

\newcommand{\e}{\mathrm{e}}

% Wolfram Mathworld says $L^2$ is for function spaces and $\ell^2$ is for vectors
% But then they seem to use $L^2$ for vectors throughout the site, and so does
% wikipedia.

 % See usage in notation.tex. Chosen to match Daphne's book.

\DeclareMathOperator*{\argmin}{arg\,min}

\title{Last Iterate Convergence of Incremental Methods \\ and Applications in Continual Learning}
\author{Xufeng Cai\footremember{wisc}{Department of Computer Sciences, University of Wisconsin-Madison. XC (\href{mailto:xcai74@wisc.edu}{xcai74@wisc.edu}), JD (\href{mailto:jelena@cs.wisc.edu}{jelena@cs.wisc.edu}).}
\and Jelena Diakonikolas\footrecall{wisc}
}
\date{}

\begin{document}

\maketitle
\begin{abstract}%
Incremental gradient and incremental proximal methods are a fundamental class of optimization algorithms used for solving finite sum problems, broadly studied in the literature. Yet, 
without strong convexity, their convergence guarantees have primarily been established for the ergodic (average) iterate. 
Motivated by applications in continual learning, we obtain the first convergence guarantees for the last iterate of both incremental gradient and incremental proximal methods, in general convex smooth (for both) and convex Lipschitz (for the proximal variants) settings. 
Our oracle complexity bounds for the last iterate nearly match (i.e., match up to a square-root-log or a log factor) the best known oracle complexity bounds for the average iterate, for both classes of methods. 
We further obtain generalizations of our results to weighted averaging of the iterates with increasing weights {and} for randomly permuted ordering of updates.
We study incremental proximal methods as a model of continual learning with generalization and argue that large amount of regularization is crucial to preventing catastrophic forgetting. 
Our results  generalize last iterate guarantees for incremental methods compared to state of the art, as such results were previously known only for overparameterized linear models, which correspond to convex quadratic problems with infinitely many solutions.
\end{abstract}

\section{Introduction}\label{sec:intro}

We study the last iterate convergence of incremental (gradient and proximal) methods, which apply to problems of the form
\begin{align}\label{eq:CL-finite-sum}
\min_{\vx \in \R^d}\Big\{f(\vx) := \frac{1}{T}\sum_{t = 1}^T f_t(\vx)\Big\}. 
\end{align}
As is standard, we assume that each component function $f_t$ is convex and either smooth or Lipschitz-continuous and that a minimizer $\vx_* \in \argmin_{\vx} f(\vx)$ exists. 

Incremental methods traverse all the component functions $f_t$ in a cyclic manner, updating their iterates by taking either gradient descent steps (in the case of incremental gradient methods) or proximal-point steps (in the case of incremental proximal methods) with respect to the individual component functions $f_t$. For a more precise statement of these two classes of methods, see Sections~\ref{sec:IGD}~and~\ref{sec:proximal-IGD}. Same as prior work~\citep{bertsekas2011incremental,bertsekas2011incremental2,li2019incremental,mishchenko2020random,cai2023empirical}, we define oracle complexity of these methods as the number of first-order or proximal oracle queries to individual component functions $f_t$ required to reach a solution $\vx$ with optimality gap $f(\vx) - f(\vx_*) \leq \epsilon$ on the worst-case instance from the considered problem class, where $\epsilon > 0$ is a given error parameter. 

Our main motivation for studying the last iterate convergence of incremental methods comes {from} applications in continual learning (CL). In particular, CL models a sequential learning setting, where a machine learning model gets updated over time, based on the changing or evolving distribution of the data passed to the learner. A major challenge in such dynamic learning settings is the degradation of model performance on previously seen data, known as the catastrophic forgetting~\citep{mccloskey1989catastrophic,goodfellow2013empirical}, which has been well-documented in various empirical studies; see, e.g., recent surveys~\citep{de2021continual,parisi2019continual}. On the theoretical front, however, much is still missing from the understanding of possibilities and limitations related to catastrophic forgetting, with results for basic learning settings being obtained only very recently~\citep{evron2022catastrophic,evron2023continual,lin2023theory,goldfarb2023analysis,evron2024joint,peng2022continual,peng2023ideal,chen2022memory,cao2022provable,balcan2015efficient}. 

While there are different learning setting studied under the umbrella of CL, following recent work~\citep{evron2022catastrophic,evron2023continual}, we focus on the CL settings with repeated replaying of tasks, where the forgetting after $K$ epochs/full passes over $T$ tasks is defined by~\citet{doan2021theoretical,evron2022catastrophic,evron2023continual} 
\begin{align}\label{eq:CL-cyclic}
    f_K(\vx_{KT}) := \frac{1}{T}\sum_{t = 1}^T f_t(\vx_{KT}), 
\end{align}
where $\vx_n \in \sR^d$ is the model parameter vector used at time $n \in \mathbb{N}_+$. 
Such settings arise in applications that naturally undergo cyclic changes in the data/tasks, due to diurnal or seasonal cycles (e.g., in agriculture, forestry, e-commerce, astronomy, etc.). Forgetting is \emph{catastrophic} if $f_K(\vx_{KT})\overset{K \rightarrow\infty}\nrightarrow 0$.

Observe that Eq.~\eqref{eq:CL-cyclic} corresponds to the value of the objective function from Eq.~\eqref{eq:CL-finite-sum} at the final iterate $\vx_{KT}.$ Prior work~\citep{evron2022catastrophic} that obtained rigorous bounds for the forgetting Eq.~\eqref{eq:CL-cyclic} applied to the problems where each $f_t$ is a convex quadratic function minimized by the same $\vx_*$ such that $f_t(\vx_*) = f(\vx_*) = 0.$ By contrast, we consider more general convex functions that are either smooth or Lipschitz continuous, and make no assumption about $\vx_*$ beyond being a minimizer of the (average) function $f$. 
Since we are not assuming that $f(\vx_*) = 0,$ our focus is on bounding the excess forgetting $f_K(\vx_{KT}) - f(\vx_*)$, which is equivalently the optimality gap for the last iterate in Eq.~\eqref{eq:CL-finite-sum}.

The method considered in~\citet{evron2022catastrophic} minimized each component function exactly, outputting the solution closest to the previous iterate in each iteration, using implicit regularization properties of SGD. To obtain the results, it was then crucial that the component functions were quadratic (so that there is an explicit, closed-form solution for each subproblem) and that all component functions shared a nonempty set of minimizers with value zero (so that forgetting can be controlled despite aggressive adaption to the current task). Our work using the incremental proximal method instead considers \emph{explicit} regularization to enforce closeness of models on differing tasks, which can potentially degrade the performance on the current task, but as a tradeoff can control forgetting and it addresses a much broader class of loss functions.

\subsection{Contributions} Our main contributions can be summarized as follows, where $\sigma_*^2 := \frac{1}{T}\sum_{t = 1}^T\|\nabla f_t(\vx_*)\|^2$ denotes the gradient variance at the optimum. The quantity $\sigma_*^2$ is intrinsic to oracle complexity of incremental methods \citep{mishchenko2020random, nguyen2021unified,cai2023empirical,cha2023tighter}.

\paragraph{Last iterate convergence of Incremental Gradient Descent (IGD).} {We provide the first oracle complexity guarantees for the last iterate of standard variants of IGD with either deterministic or randomly permuted ordering of the updates, applied to convex $L$-smooth objectives.}
Up to a {square-root}-log factor, our oracle complexity bounds in Theorem \ref{thm:convergence-IGD-last} and Corollary \ref{cor:shuffled-sgd} -- which are $\widetilde \gO\big(\frac{TL\|\vx_0 - \vx_*\|^2}{\epsilon} + \frac{T L^{1/2}\sigma_*\|\vx_0 - \vx_*\|^2}{\epsilon^{3/2}}\big)$ for the deterministic variant and $\widetilde \gO\big(\frac{TL\|\vx_0 - \vx_*\|^2}{\epsilon} + \frac{\sqrt{TL}\sigma_*\|\vx_0 - \vx_*\|^2}{\epsilon^{3/2}}\big)$ for the randomly permuted variant -- match the best known oracle complexity bounds for these methods, previously known only for the (uniformly) average iterate~\citep{mishchenko2020random, nguyen2021unified,cai2023empirical,cha2023tighter}. We further extend our results to increasing weighted averaging of the iterates in Corollary~\ref{cor:weighte-avg}, which places more weight on the more recent iterates, removing the excess square-root-log factor in the resulting oracle complexity bound.

\paragraph{Last iterate convergence of Incremental Proximal Method (IPM).} {We provide the first oracle complexity guarantees for the last iterate of IPM applied to convex and either smooth or Lipschitz-continuous objectives.} 
When each component function is convex and $L$-smooth, we show (in Theorem~\ref{thm:convergence-prox-IGD-last-smooth}) that IPM has the same $\widetilde \gO\big(\frac{TL\|\vx_0 - \vx_*\|^2}{\epsilon} + \frac{T L^{1/2}\sigma_*\|\vx_0 - \vx_*\|^2}{\epsilon^{3/2}}\big)$ oracle complexity guarantee as IGD. This result is new for any variant of this method -- with average or last iterate as its output. When component functions are convex and $G$-Lipschitz, our oracle complexity $\widetilde \gO\Big(\frac{G^2 T\|\vx_0 - \vx_*\|^2}{\epsilon^2}\Big)$ in Theorem~\ref{thm:convergence--prox-IGD-last-nonsmooth} matches the best known oracle complexity bound up to a log factor, which was previously known only for the (uniformly) average iterate~\citep{bertsekas2011incremental2,li2019incremental}. We further argue (in Corollary~\ref{thm:convergence-prox-IGD-last-smooth-inexact}~and~Corollary~\ref{thm:convergence-prox-IGD-last-nonsmooth-inexact}) that {for both settings} our analysis can be extended to admit \emph{inexact} proximal point evaluations -- an important setting not addressed by prior work on general IPM. 

\paragraph{IPM as a model of CL.}
{We initiate the study of IPM as a model of CL, corresponding to sequential ridge-regularized model training commonly used in practice. On the positive side, our last-iterate convergence results for IPM in Theorem \ref{thm:convergence-prox-IGD-last-smooth} and Theorem~\ref{thm:convergence--prox-IGD-last-nonsmooth} demonstrate that forgetting (corresponding to the optimality gap at the last iterate) can be effectively controlled if the amount of employed regularization is sufficiently high. On the negative side, we show that for any constant amount of regularization, forgetting is always catastrophic, even for least squares problems. In particular, we provide a univariate quadratic example such that for any constant regularization parameter, the asymptotic limit of (excess) forgetting is non-zero. Further, we demonstrate that for forgetting to be made smaller than some target $\epsilon,$ the regularization must be sufficiently high and depend polynomially on $1/\epsilon,\, T,$ and $\sigma_*.$ These results are summarized in Theorem \ref{thm:mvt} and highlight the limitations of regularization as a black-box tool for controlling forgetting in CL.
}

\subsection{Further related work}
{To remedy catastrophic forgetting, various \emph{empirical} approaches have been proposed, and this work is most closely related to (i) memory-based approaches, which store samples from previous tasks and reuse those data for training on the current task~\citep{robins1995catastrophic,lopez2017gradient,rolnick2019experience} and (ii) regularization-based approaches, which regularize the loss of the current task to ensure the new model parameter vector is close to the prior ones~\citep{kirkpatrick2017overcoming}; for a more complete survey, we refer to~\citet{de2021continual}.}
{On the \emph{theoretical} front, the results on the  forgetting for cyclic replaying of tasks considered in our work have only been established for linear models~\citep{evron2022catastrophic,evron2023continual,swartworth2024nearly}.}
In particular, the analysis for linear regression tasks in~\citet{evron2022catastrophic,swartworth2024nearly} crucially relies on the exact minimization of each task using (S)GD to have closed-form updates between tasks, while~\citet{evron2023continual} uses alternating projections to analyze linear classification tasks.
It is unclear how to extend either of these results to general convex loss functions that we consider.  

On a technical level, our results are most closely related to 1) the literature on last iterate convergence guarantees for subgradient-based methods and stochastic gradient descent (SGD)  and 2) the literature on incremental gradient methods and shuffled SGD. For 1), we draw inspiration from the recent results~\citep{zamani2023exact,liu2023revisiting}, which rely on a clever construction of reference points $\vz_k$ with respect to which a gap quantity {$f(\vx_k) - f(\vz_{k - 1})$} gets bounded to deduce a bound for the optimality gap $f(\vx_k) - f(\vx_*)$ of the last iterate $\vx_*$. For the latter line of work 2), we generalize the analysis used exclusively for the optimality gap of the (uniformly) average iterate~\citep{mishchenko2020random,nguyen2021unified,cha2023tighter,cai2023empirical} to control the gap-like quantities {$f(\vx_k) - f(\vz_{k - 1})$}, which require a more careful argument for controlling all error terms introduced by replacing $\vx_*$ by $\vz_{k-1}$ without introducing spurious unrealistic assumptions about the magnitudes of the component functions' gradients. We finally note that both these related lines of work concern problems on which progress was made only in the very recent literature. In particular, while the oracle complexity upper bound for the average iterate of SGD in convex Lipschitz-continuous settings has been known for decades and its analysis is routinely taught in optimization and machine learning classes, there were no such results for the last iterate of SGD until 2013~\citep{shamir2013stochastic} with improvements and generalizations to these results obtained as recently as in the past year~\citep{liu2023revisiting}. Regarding 2), obtaining any nonasymptotic convergence guarantees for incremental gradient methods/shuffled SGD had remained open for decades~\citep{bottou2009curiously} until a recent line of work~\citep{gurbuzbalaban2021random,shamir2016without,haochen2019random,nagaraj2019sgd,ahn2020sgd,rajput2020closing, yun2022minibatch, safran2020good}. {For nonconvex problems,~\citet{yu2023high} proved the high-probability last iterate guarantee for shuffled SGD with stopping criteria, which is technically disjoint from our work.} For smooth convex problems we consider, the convergence results were obtained only in the past few years~\citep{mishchenko2020random,nguyen2021unified,cha2023tighter} and improved in \citet{cai2023empirical} using a fine-grained analysis inspired by the recent advances in cyclic methods~\citep{song2021coder,cai2022cyclic,lin2023accelerated}. However, all those results are for the (uniformly) average iterate, while obtaining convergence results for the last iterate had remained open. 

\paragraph{Concurrent independent work.} {An independent and concurrent work to ours \citep{liu2024last} studied the last-iterate convergence of shuffled SGD for composite (strongly) convex smooth/Lipschitz optimization. For the same problems as studied in our Section~\ref{sec:IGD}, they obtained the same convergence results. The remaining results in \citet{liu2024last} and our work are not directly comparable, as the motivation for the two works and the studied settings are different. In particular, the focus of \citet{liu2024last} is on the last iterate convergence of shuffled SGD, and thus they study it in depth, considering different Lipschitz/smoothness constants for component functions, strong convexity, and composite settings. Our focus on the other hand is on settings relevant to continual learning, and thus we additionally consider the convergence of a weighted average of iterates and put more weight on the incremental proximal method, which were not considered in \citet{liu2024last}. 
It is of note that while \citet{liu2024last} used proximal steps to handle the nonsmooth portion of the objective in their composite setting, the proximal maps are not applied component-wise, but at the end of a cycle, to only one (regularizer) function (e.g., to handle constraints or joint regularization).
}

\subsection{Notation and preliminaries}
{We consider the $d$-dimensional real space $(\R^d, \|\cdot\|)$, where $\|\cdot\|$ is the $\ell_2$ norm, and denote $[T] := \{1, 2, \dots, T\}$.}
Given a proper, convex, lower semicontinuous function $f$, its proximal operator and Moreau envelope are defined by 
\begin{align*}
      \mathrm{prox}_{\eta f}(\vx) = \argmin_{\vy \in \sR^d}\Big\{\frac{1}{2\eta}\|\vy - \vx\|^2 + f(\vy)\Big\}, \;
    M_{\eta f}(\vx) = \min_{\vy \in \sR^d}\Big\{\frac{1}{2\eta}\|\vy - \vx\|^2 + f(\vy)\Big\}, 
\end{align*}
respectively, for a parameter $\eta > 0$. {The Moreau envelope is $\frac{1}{\eta}$-smooth with the gradient $\nabla M_{\eta f}(\vx) = \frac{1}{\eta}(\vx - \mathrm{prox}_{\eta f}(\vx)) \in \partial f(\mathrm{prox}_{\eta f}(\vx))$, where $\partial f(\cdot)$ denotes the subdifferential of $f$.}

We make the following assumptions. The first one is made throughout the paper.
\begin{assumption}\label{assp:convex}
Each 
$f_t$ is convex and there exists a minimizer $\vx_* \in \argmin_{\vx \in \sR^d} f(\vx)$.
\end{assumption}
By Assumption~\ref{assp:convex}, $f$ is also convex. 
In nonsmooth settings, we make an additional standard assumption that the component functions are Lipschitz-continuous.
\begin{assumption}\label{assp:Lipschitz}
Each $f_t$ is $G$-Lipschitz, i.e., $|f_t(\vx) - f_t(\vy)| \leq G\|\vx -\vy\|$ for any $\vx, \vy \in \R^d$; thus $\|g_t(\vx)\| \leq G$ for all $g_t(\vx) \in \partial f_t(\vx)$. 
\end{assumption}
For the smooth settings, we make the following assumption.
\begin{assumption}\label{assp:smooth}
Each $f_t$ is $L$-smooth, i.e., $\|\nabla f_t(\vx) - \nabla f_t(\vy)\| \leq L\|\vx - \vy\|$ for any $\vx, \vy \in \R^d$. 
\end{assumption}
We remark that Assumptions~\ref{assp:Lipschitz}~and~\ref{assp:smooth} imply that $f$ is also $G$-Lipschitz and $L$-smooth, respectively, 
{These two assumptions can also be generalized to be with distinct Lipschitz/smoothness constants, and our results would scale with the average Lipschitz/smoothness constant using the techniques from~\citet{cai2023empirical}, which we omit to keep the focus on the intricacies of the last iterate convergence.}
When $f$ is $L$-smooth and convex, we will often make use of the following standard inequality that fully characterizes the class of $L$-smooth convex functions:
\begin{equation}\label{eq:smooth+cvx}
    \frac{1}{2L}\|\nabla f(\vx) - \nabla f(\vy)\|^2 \leq f(\vy) - f(\vx) - \innp{\nabla f(\vx), \vy - \vx}, \quad \forall \vx, \vy \in \sR^d.
\end{equation}
Finally, when each $f_t$ is smooth, we assume bounded variance at $\vx_*$, {same as all prior work that considered the same settings of IGD/shuffled SGD as we do~\citep{mishchenko2020random, nguyen2021unified,tran2021smg, tran2022nesterov,cai2023empirical}.}
\begin{assumption}\label{assp:bounded-vr}
The quantity $\sigma_*^2 := \frac{1}{T}\sum_{t = 1}^T\|\nabla f_t(\vx_*)\|^2$ is bounded. 
\end{assumption}

\section{{Last Iterate Convergence of Incremental Gradient Descent}} 
\label{sec:IGD}

In this section, we {introduce our techniques for analyzing the last iterate guarantee} and bound the oracle complexity for the last iterate of incremental gradient descent (IGD), assuming component functions are smooth and convex. In the context of CL, this corresponds to a simplified setup where the learner incrementally performs a single gradient step on each task and cyclically replays the $T$ tasks. Nevertheless, this setup serves as a warmup to the proximal setup we discuss in the next section. Additionally, it is of independent interest as incremental gradient methods are widely used in the optimization and machine learning literature, where despite the lack of prior theoretical justification, it is typically the last iterate that gets output by the algorithm in practice. 

We summarize the IGD method in Alg.~\ref{alg:IGD}, assuming the incremental order $1, 2, \dots, T$ in each epoch for simplicity and without loss of generality. The oracle complexity for the (uniformly) average iterate of IGD has been shown to be $\gO(\frac{TL}{\epsilon} + \frac{T\sqrt{L}\sigma_*}{\epsilon^{3/2}})$ for an $\epsilon$-optimality gap~\citep{mishchenko2020random,cai2023empirical} under the same assumptions we make here (Assumptions~\ref{assp:convex},~\ref{assp:smooth},~and~\ref{assp:bounded-vr}), while, as discussed before, there were no guarantees for either the last iterate or even a weighted average of the iterates. The main result of this section is that the same oracle complexity applies to the last iterate of IGD, up to a square-root-log factor. We then further generalize this result to weighted averages of iterates with increasing weights and to variants with  randomly permuted order of cyclic updates.

\begin{algorithm}[t]
\caption{Incremental Gradient Descent (IGD)}
\label{alg:IGD}
\begin{algorithmic}
\STATE{{\bf Input:} initial point $\vx_0$, number of epochs $K$, step size $\{\eta_k\}$}
\FOR{$k=1:K$}
\STATE{$\vx_{k - 1, 1} = \vx_{k - 1}$}
\FOR{$t=1:T$}
\STATE{$\vx_{k - 1, t + 1} = \vx_{k - 1, t} - \eta_k \nabla f_{t}(\vx_{k - 1, t})$}
\ENDFOR
\STATE{$\vx_{k} = \vx_{k - 1, T + 1}$}
\ENDFOR
\RETURN $\vx_K$ 
\end{algorithmic}
\end{algorithm}

We begin the analysis by deriving a bound on the gap with respect to an arbitrary but fixed reference point $\vz,$ as summarized in the following lemma with its proof in Appendix~\ref{appx:IGD}. This stands in contrast to arguments deriving bounds on the average iterate, which take $\vz = \vx_*.$ While this may seem like a minor difference, it affects the analysis non-trivially: a direct extension of prior arguments would require replacing Assumption~\ref{assp:bounded-vr} -- which imposes a bound on $\frac{1}{T}\sum_{t=1}^T \|\nabla f_t(\vx_*)\|^2$ -- with a bound on $\frac{1}{T}\sum_{t=1}^T \|\nabla f_t(\vz)\|^2$ for an arbitrary $\vz$, which would be a much stronger requirement. 
\begin{restatable}{lemma}{IGDCycle}\label{lemma:IGD-cycle}
Under Assumptions~\ref{assp:convex}~and~\ref{assp:smooth}, for any $\vz \in \R^d$ that is fixed in the $k$-th cycle of Alg.~\ref{alg:IGD} and any $\alpha, \beta > 0$ such that $\frac{1}{\alpha} + \frac{1}{\beta} \leq \frac{1}{2}$, {if $\eta_k \leq \frac{1}{\sqrt{\beta}TL}$}, 
then for all $k \in [K],$
\begin{equation*}
\begin{aligned}
      T\big(f(\vx_{k}) - f(\vz)\big) \leq \eta_k^2 L \sum_{t = 1}^T \Big\|\sum_{s = t}^T\nabla f_s(\vx_*)\Big\|^2 + \frac{\alpha}{\beta} T\big(f(\vz) - f(\vx_*)\big) + \frac{1}{2\eta_k}\big(\|\vx_{k - 1} - \vz\|^2 - \|\vx_{k} - \vz\|^2\big).
\end{aligned}
\end{equation*}
\end{restatable}

Our next step is to specify our choice of the reference point $\vz$ for each epoch. 
In particular, we consider a sequence of points $\{\vz_k\}_{k = -1}^{K - 1}$ that is recursively defined as a convex combination of {the algorithm iterate} $\vx_k$ and the previous reference point $\vz_{k - 1}$: 
\begin{align}\label{eq:zk-1}
    \vz_k = (1 - \lambda_k)\vx_k + \lambda_k\vz_{k - 1}
\end{align}
for $k \geq 0$ with $\vz_{-1} = \vx_*$ and {$\lambda_k \in [0, 1]$} to be set later. 
Observe that $\vz_k$ can also be written as a convex combination of the points $\{\vx_j\}_{j = 0}^k$ and $\vx_*$ by unrolling the recursion, i.e.,\
\begin{align}\label{eq:zk-2}
\vz_k = (1 - \lambda_k)\vx_k + \Big(\prod_{i = 0}^k\lambda_i\Big)\vx_* + \sum_{j = 0}^{k - 1}\Big(\prod_{i = j + 1}^k\lambda_i\Big)(1 - \lambda_j)\vx_j, 
\end{align}
where $(1 - \lambda_k) + \prod_{i = 0}^k\lambda_i + \sum_{j = 0}^{k - 1}\big(\prod_{i = j + 1}^k\lambda_i\big)(1 -\lambda_j) = 1$. If we set $\lambda_k = 1$ for all $k$, then we have $\vz_k = \vx_*$ and recover the bound $f(\vx_k) - f(\vx_*)$ in Lemma~\ref{lemma:IGD-cycle}, which leads to the average iterate guarantee. For general $\{\lambda_k\}$, we obtain the following lemma to relate the function value gap $f(\vx_k) - f(\vz_{k - 1})$ to the optimality gap $f(\vx_k) - f(\vx_*)$, {whose proof is deferred to Appendix~\ref{appx:IGD}.}
\begin{restatable}{lemma}{zk}\label{lemma:zk}
Let $\vz_k$ be defined by Eq.~\eqref{eq:zk-1} for a given sequence of parameters $\lambda_k \in (0, 1)$, where $k \geq 0$ and $\vz_{-1} = \vx_*.$ Under Assumption~\ref{assp:convex}, if there exists a sequence of nonnegative weights $w_k$ such that $\lambda_{k}w_{k}\leq w_{k - 1}$ for $k \in [K - 1]$, then for all $k \in [K]$:
\begin{enumerate}[wide=0pt, leftmargin=\parindent]
    \item $
        w_{k - 1}\big(f(\vz_{k - 1}) - f(\vx_*)\big) \leq \sum_{j = 0}^{k - 1}w_j(1 - \lambda_j)\big(f(\vx_j) - f(\vx_*)\big);
        $
    \item $w_{k - 1}\big(f(\vx_k) - f(\vz_{k - 1})\big) \geq w_{k - 1}(f(\vx_k) - f(\vx_*)) - \sum_{j = 0}^{k - 1}w_j(1 - \lambda_j)(f(\vx_j) - f(\vx_*))$.
\end{enumerate}
\end{restatable}
The role of the sequence of weights $\{w_k\}$ in Lemma~\ref{lemma:zk} is to ensure that we can telescope the terms $\|\vx_{k - 1} - \vz_{k - 1}\|^2 - \|\vx_k - \vz_{k - 1}\|^2$ in Lemma~\ref{lemma:IGD-cycle}.
On the other hand, to succinctly see why such $\{\vz_k\}$ could lead to the desired last iterate guarantees, we note that the second part of Lemma~\ref{lemma:zk} indicates that $w_{k - 1}\big(f(\vx_k) - f(\vz_{k - 1})\big)$ intrinsically includes retraction terms of the optimality gaps at the previous iterates. Hence, we can deduct $w_{K - 1}(f(\vx_K) - f(\vx_*))$ from $\sum_{k = 1}^K w_{k - 1}\big(f(\vx_k) - f(\vz_{k - 1})\big)$ by properly choosing $\lambda_k$ and $w_k$ to cancel out the optimality gap terms at the intermediate iterates. In this case, the convergence rate for the last iterate is characterized by the growth rate of $\{w_{k}\}$. 

Our choice of the reference points $\{\vz_k\}$ is inspired by the recent work~\citep{zamani2023exact,liu2023revisiting} on last iterate guarantees for subgradient methods and SGD with $\lambda_k = \frac{w_{k - 1}}{w_k}$. 
However, their proof techniques are not directly applicable to incremental methods, due to several technical obstacles including additional nontrivial error terms of the form $\frac{\alpha}{\beta}T\big(f(\vz_k) - f(\vx_*)\big)$ in Lemma~\ref{lemma:IGD-cycle}
arising from the incremental gradient steps using reference points other than $\vx_*$. Such error terms inherently deteriorate the growth rate of $\{w_k\}$ and could possibly lead to a worse last iterate rate compared to the rate on the average iterate. In the following theorem, we calibrate such degradation on the last iterate guarantees, and show that with a slightly smaller step size one can still achieve essentially the same rate as for the average iterate. {The proof is provided in Appendix~\ref{appx:IGD} due to space constraints.}
\begin{restatable}{theorem}{convergenceIGD}\label{thm:convergence-IGD-last}
Under Assumptions~\ref{assp:convex}, \ref{assp:smooth}, and \ref{assp:bounded-vr} and for positive parameters $\alpha, \beta > 0$ such that $\frac{1}{\alpha} + \frac{1}{\beta} \leq \frac{1}{2}$, if the step size is fixed and satisfies $\eta_k = \eta \leq \frac{1}{\sqrt{\beta}TL}$, the output $\vx_K$ of Alg.~\ref{alg:IGD} satisfies 
\begin{align}\label{eq:IGD-bound}
    f(\vx_K) - f(\vx_*) \leq \;& \e\eta^2T^2\sigma_*^2 L(1 + \beta/\alpha)K^{\frac{\alpha/\beta}{1 + \alpha/\beta}} + \frac{\e\|\vx_0 - \vx_*\|^2}{2\eta T K^{\frac{1}{1 + \alpha/\beta}}}.
\end{align}
With $\alpha = 4, \beta = 4\log K$, there exists a constant step size $\eta$ such that $f(\vx_K) - f(\vx_*) \leq \epsilon$ for $\epsilon > 0$ after $\widetilde \gO\big(\frac{TL\|\vx_0 - \vx_*\|^2}{\epsilon} + \frac{T L^{1/2}\sigma_*\|\vx_0 - \vx_*\|^2}{\epsilon^{3/2}}\big)$ individual gradient evaluations.
\end{restatable}
The error term $f(\vz_{k - 1}) - f(\vx_*)$ in Lemma~\ref{lemma:IGD-cycle}
plays the role of slowing the last iterate rate, as  calibrated by the dependence on $\alpha / \beta$ in Eq.~\eqref{eq:IGD-bound}. To remedy such degradation compared to the average iterate rate~\citep{mishchenko2020random,cai2023empirical}, one natural thought is to make $\alpha / \beta$ sufficiently small. In particular, we choose $\alpha / \beta = 1 / \log K$ and show that the last iterate rate nearly matches the best known rate on the average iterate, with the trade-off of requiring order-$\frac{1}{\sqrt{\log K}}$ smaller step sizes in comparison with~\citet{mishchenko2020random,cai2023empirical}. This translates into the oracle complexity that is larger by at most a $\sqrt{\log(1/\epsilon)}$ factor. {For most cases of interest, this quantity can be treated as a constant: for example,  for $\epsilon = 10^{-8},$ $\sqrt{\log(1/\epsilon)} \approx 4.29.$}

{On the other hand, with $\lambda_k \equiv 1$ and constant $w_k$, Lemmas~\ref{lemma:IGD-cycle}~and~\ref{lemma:zk} directly imply the average iterate guarantee, as a sanity check. Additionally, instead of zeroing the weights of the optimality gap terms $f(\vx_k) - f(\vx_*)$ for $k \in [K - 1]$ to obtain the last iterate guarantee, one can deduce the convergence rate on the increasing weighted averaging which places more weight on later iterates, as formalized in the following corollary whose proof is deferred to Appendix~\ref{appx:IGD}.}
\begin{restatable}[Increasing Weighted Averaging]{corollary}{weightedAverage}
\label{cor:weighte-avg}
Under Assumptions~\ref{assp:convex},~\ref{assp:smooth},~and~\ref{assp:bounded-vr} and for parameters $\alpha, \beta > 0$ such that $\frac{1}{\alpha} + \frac{1}{\beta} \leq \frac{1}{2}$, if the step size $\eta$ is fixed and such that $\eta \leq \frac{1}{\sqrt{\beta}TL}$, then for any constant $c \in (0, 1]$ and increasing sequence $\{w_k\}_{k = 0}^{K - 1}$ with $w_k = \frac{(1 + \alpha / \beta)(K - k) + 1 - c}{(1 + \alpha / \beta)(K - k)} w_{k - 1}$, Alg.~\ref{alg:IGD} outputs $\hat \vx_K = \sum_{k = 1}^K\frac{w_{k - 1}\vx_k}{\sum_{k = 1}^K w_{k - 1}}$ satisfying
$$
    f(\hat \vx_K) - f(\vx_*) \leq \frac{T^2\sigma_*^2 \eta^2 L}{c} + \frac{\|\vx_0 - \vx_*\|^2}{2c\eta T K}. 
$$
In particular, there exists a constant step size $\eta$ such that $f(\hat \vx_K) - f(\vx_*) \leq \epsilon$ for $\epsilon > 0$ after $\gO\big(\frac{T L\|\vx_0 - \vx_*\|^2}{c\epsilon} + \frac{T L^{1/2}\sigma_*\|\vx_0 - \vx_*\|^2}{c^{3/2}\epsilon^{3/2}}\big)$ individual gradient evaluations.
\end{restatable}

We remark that increasing weighted averaging shaves off the (at most) square-root-log term appearing in the last iterate rate above and recovers the best known rate for the average iterate~\citep{mishchenko2020random,cai2023empirical}. The parameters $\alpha, \beta$ are included in the weights $w_k$ controlling the growth rate of the increasing sequence $\{w_k\}$. When $\alpha / \beta \rightarrow \infty$, increasing weighted averaging reduces to the uniform weighted average.

\paragraph{Shuffled SGD.}
We extend our analysis to handle the case with possible random permutations on the task ordering of each epoch, showing order-$\sqrt{T}$ improvements in complexity if involving randomness. We consider two main permutation strategies of particular interests in the literature on shuffled SGD: (i) random reshuffling (RR):
randomly generate permutations at the beginning of each epoch; (ii) shuffle-once (SO): generate a single random permutation at the beginning and use it in all epochs. {Those strategies lead to order-($1/T$) improvements in bounding the variance term $\eta_k^2 L \sum_{t = 1}^T \big\|\sum_{s = t}^T\nabla f_s(\vx_*)\big\|^2$ from  Lemma~\ref{lemma:IGD-cycle}, and we state the improved convergence results with permutations in the following corollary. The proof is deferred to Appendix~\ref{appx:IGD}.}
Our last iterate guarantee nearly matches the best known average iterate convergence rate for shuffled SGD~\citep{mishchenko2020random,nguyen2021unified,cai2023empirical} and the lower bound results on the RR scheme~\citep{cha2023tighter}, with a slightly (order-$(\frac{1}{\sqrt{\log K}})$) smaller step size.
\begin{restatable}[Shuffled SGD (RR/SO)]{corollary}{shuffleSGD}\label{cor:shuffled-sgd}
Under Assumptions~\ref{assp:convex},~\ref{assp:smooth}~and~\ref{assp:bounded-vr} and for positive parameters $\alpha, \beta > 0$ such that $\frac{1}{\alpha} + \frac{1}{\beta} \leq \frac{1}{2}$, if the step size is fixed and such that $\eta \leq \frac{1}{\sqrt{\beta} T L}$, the output $\vx_K$ of Alg.~\ref{alg:IGD} with uniformly random (SO/RR) shuffling satisfies 
\begin{align*}
    \E[f(\vx_K) - f(\vx_*)] \leq \;& \e\eta^2\sigma_*^2 T L(1 + \beta/\alpha)K^{\frac{\alpha/\beta}{1 + \alpha/\beta}} + \frac{\e\|\vx_0 - \vx_*\|^2}{2T\eta K^{\frac{1}{1 + \alpha/\beta}}}.
\end{align*}
With $\alpha = 4, \beta = 4\log K$, there exists a constant step size $\eta$ such that $\E[f(\vx_K) - f(\vx_*)] \leq \epsilon$ for $\epsilon > 0$ after $\widetilde \gO\big(\frac{TL\|\vx_0 - \vx_*\|^2}{\epsilon} + \frac{\sqrt{TL}\sigma_*\|\vx_0 - \vx_*\|^2}{\epsilon^{3/2}}\big)$ individual gradient evaluations.
\end{restatable}

\section{Incremental Proximal Method}
\label{sec:proximal-IGD}
In this section, we leverage the proof techniques developed in the previous section and derive the last iterate convergence bound for the incremental proximal method (IPM) summarized in Alg.~\ref{alg:prox-IGD}. While the incremental proximal method is a fundamental method broadly studied in the optimization literature and thus the last iterate convergence bounds are of independent interest, our main motivation for considering this problem comes from CL applications, as discussed in the introduction. Thus we begin this section by briefly explaining this connection and reasoning. 

The considered setup is  
motivated by the general $\ell_2$-regularized CL setting~\citep{heckel2022provable,li2023fixed}. In particular, each proximal iteration $\vx_{k - 1, t + 1} = \mathrm{prox}_{\eta_k f_t}(\vx_{k - 1, t})$ can be interpreted as minimizing the $\ell_2$ (a.k.a.\ ridge) regularized loss $f_t(\vx) + \frac{1}{2\eta_k}\|\vx - \vx_{k-1, t}\|_2^2$ corresponding to the current task $t$, which aligns with the common machine learning practice of using regularization to improve the generalization error and prevent forgetting. 
When $\eta_k \rightarrow \infty$, the proximal point step reduces to the CL setting where the learner exactly minimizes the loss of the current task, i.e.,\ $\vx_{k - 1, t + 1} = \argmin_{\vx \in \R^d}f_t(\vx)$,
while the regularization effect vanishes and causes larger forgetting on previous tasks. When $\eta_k$ is small, the proximal point step is easier to compute with larger quadratic regularization and prevents deviating from the previous iterate thus causing less forgetting. However, in this case the plasticity of the model may be deteriorated. 

We also note that our analysis of IPM is related to previous work on cyclic replays for overparameterized linear models with (S)GD~\citep{evron2022catastrophic, swartworth2024nearly}, as (S)GD in this case acts as an implicit regularizer~\citep{gunasekar2018characterizing,zhang2021understanding} (whereas proximal point update acts as an \emph{explicit} regularizer). The two lines of work are not directly comparable: \citet{evron2022catastrophic, swartworth2024nearly} considers exact minimization of component/task loss function and bounds the forgetting, but only addresses convex quadratics where the component loss functions have a nonempty intersecting set of minima (which implies $\sigma_* = 0$ in Assumption~\ref{assp:bounded-vr}). On the other hand, our work addresses much more general 
(not necessarily quadratic) {convex functions} and does not require $\sigma_* = 0,$ but instead relies on sufficiently large regularization.

\begin{algorithm}
\caption{Incremental Proximal Method (IPM)}
\label{alg:prox-IGD}
\begin{algorithmic}
\STATE{{\bf Input:} initial point $\vx_0$, number of epochs $K$, step size $\{\eta_k\}$}
\FOR{$k=1:K$}
\STATE{$\vx_{k - 1, 1} = \vx_{k - 1}$}
\FOR{$t=1:T$}
\STATE{$\vx_{k - 1, t + 1} = \mathrm{prox}_{\eta_k f_{t}}(\vx_{k - 1, t}) = \argmin_{\vx \in \R^d}\big\{\frac{1}{2\eta_k}\|\vx - \vx_{k - 1, t}\|^2 + f_{t}(\vx)\big\}$}
\ENDFOR
\STATE{$\vx_{k} = \vx_{k - 1, T + 1}$}
\ENDFOR
\RETURN $\vx_K$ 
\end{algorithmic}
\end{algorithm}

\subsection{Smooth convex setting}
We first study the setting where the loss function of each task is convex and smooth, for which we can show faster convergence and which covers many regression tasks studied in prior CL work; see e.g.,~\citet{evron2022catastrophic,evron2024joint}. In contrast, prior work either only focused on nonsmooth settings for IPM~\citep{bertsekas2011incremental2,li2019incremental,li2020fast} or studied different algorithms without component-wise proximal steps for smooth settings~\citep{bertsekas2015incremental,mishchenko2022proximal}.

Under component smoothness, the proximal iteration is equivalent to the backward gradient step: 
\begin{align*}
    \vx_{k - 1, t + 1} = \vx_{k - 1, t} - \eta_k \nabla f_t(\vx_{k - 1, t + 1}).
\end{align*}
{Hence, much of the analysis from Section~\ref{sec:IGD} can be adapted here, and the main difference lies in bounding the gap $f(\vx_k) - f(\vz)$ within each epoch with decomposition w.r.t.\ $\vx_{k - 1, t + 1}$ instead of $\vx_{k - 1, t}$ in comparison to Lemma~\ref{lemma:IGD-cycle}. 
Then choosing $\vz = \vz_{k - 1}$ defined by Eq.~\eqref{eq:zk-1} and following the proof of Theorem~\ref{thm:convergence-IGD-last}, as stated in the following theorem with its proof in Appendix~\ref{appx:proximal-IGD}.}
\begin{theorem}
\label{thm:convergence-prox-IGD-last-smooth}
Under Assumptions~\ref{assp:convex},~\ref{assp:smooth}~and~\ref{assp:bounded-vr} and for parameters $\alpha, \beta > 0$ such that $\frac{1}{\alpha} + \frac{1}{\beta} \leq \frac{1}{2}$, if the step size is fixed and such that $\eta \leq \frac{1}{\sqrt{\beta}TL}$, the output $\vx_K$ of Alg.~\ref{alg:prox-IGD} satisfies 
\begin{align*}
    f(\vx_K) - f(\vx_*) \leq \;& \e\eta^2T^2\sigma_*^2L(1 + \beta/\alpha)K^{\frac{\alpha/\beta}{1 + \alpha/\beta}} + \frac{\e\|\vx_0 - \vx_*\|^2}{2T\eta K^{\frac{1}{1 + \alpha/\beta}}}.
\end{align*}
With $\alpha = 4, \beta = 4\log K$, there exists a constant step size $\eta$ such that $f(\vx_K) - f(\vx_*) \leq \epsilon$ for $\epsilon > 0$ after $\widetilde \gO\big(\frac{TL\|\vx_0 - \vx_*\|^2}{\epsilon} + \frac{T L^{1/2}\sigma_*\|\vx_0 - \vx_*\|^2}{\epsilon^{3/2}}\big)$ {individual proximal oracle queries.}
\end{theorem}
A few remarks are in order here. First, the last-iterate convergence rate of IPM matches the rate of IGD on the last iterate. This is also the first  convergence result for IPM (with component-wise proximal updates) under convex smooth settings, in comparison with the prior results in convex Lipschitz setups~\citep{bertsekas2011incremental2,li2019incremental,li2020fast}. 
Second, the extensions to the increasing weighted averaging and RR/SO shuffling discussed in Section~\ref{sec:IGD} also apply to this setting, which we omit for brevity. 
Lastly, acute readers may find the step size constraint in Theorem~\ref{thm:convergence-prox-IGD-last-smooth} stands in contrast to fact that the proximal point method, which IGM reduces to when $T = 1$, converges for any positive step sizes. However, in the following theorem we show that such a step size restriction is necessary for IPM with component-wise proximal updates to reach the target optimality gap in this setting. We provide a proof sketch below, while the complete proof is in Appendix~\ref{appx:proximal-IGD}.
\begin{restatable}{theorem}{weakReg}\label{thm:mvt}
Given $L > 0, T \geq 2,$ let $\gF_{T, L}$ be the class of finite-sum functions $f(\vx) = \frac{1}{T}\sum_{t=1}^T f_t(\vx)$ whose each component $f_t$ is $L$-smooth and convex. Then: 
\begin{enumerate}[wide=0pt, leftmargin=\parindent]
    \item {For any fixed step size $\eta > 0$, there exists a function $f \in \gF_{T, L}$ such that the iterates $\vx_k$ of Alg.~\ref{alg:prox-IGD} satisfy $f(\vx_k) - f(\vx_*) \nrightarrow 0$ as $k \rightarrow \infty$. As a consequence, the forgetting is catastrophic.}
    \item For any fixed step size {$\eta > 0$ that only depends on the parameters of the problem class} ($L, T,$ error $ \epsilon > 0$), there exists a function $f \in \gF_{T, L}$ such that the iterates $\vx_k$ of Alg.~\ref{alg:prox-IGD} satisfy {$\lim_{k \rightarrow \infty}f(\vx_k) - f(\vx_*) > 1$.} 
    \item Given $\varepsilon > 0$, if the fixed step size $\eta$ satisfies $\eta \geq \min\Big\{\frac{16\sqrt{\varepsilon}}{\sqrt{TL}\sigma_*}, \, \frac{1}{TL}\Big\}$, then there exists a function $f \in \gF_{T, L}$ such that $f(\vx_k) - f(\vx_*) > \varepsilon$ for all sufficiently large $k$.
\end{enumerate}
\end{restatable}
\begin{proof}[Proof sketch]
For {all} parts of the proof, we consider $1$-dimensional quadratics $f_t(x) = \frac{L}{2}(x - \delta_t)^2$ for $\{\delta_t\}_{t \in [T]} \subseteq \R$ and $L > 0$. It is immediate that $f(x) = \frac{1}{T}\sum_{t = 1}^T f_t(x)$ is minimized at $x_* = \frac{1}{T}\sum_{t = 1}^T \delta_t$. In this case, Alg.~\ref{alg:prox-IGD} using a fixed step size $\eta$ performs closed-form updates on $f$, i.e.,\ $x_{k + 1} = \gamma^n x_{k} + (1 - \gamma)\sum_{t = 1}^T \gamma^{T - t}\delta_t,$
where $\gamma = \frac{1}{\eta L + 1} \in (0, 1)$. Given any initial point $x_0$,
\begin{align*}
      x_k - x_* = \gamma^{kT}x_0 + \sum_{t = 1}^T \Big(\frac{\gamma^{T - t}(1 - \gamma)(1 - \gamma^{kT})}{1 - \gamma^T} - \frac{1}{T}\Big)\delta_t
    \overset{k \rightarrow \infty}{\longrightarrow} \sum_{t = 1}^T \Big(\frac{\gamma^{T - t}(1 - \gamma)}{1 - \gamma^T} - \frac{1}{T}\Big)\delta_t.
\end{align*} 
{For $1$), since $\gamma \in (0, 1)$, we have $\frac{1 - \gamma}{1 - \gamma^T} = \frac{1}{\sum_{t = 0}^{T - 1}\gamma^t} > \frac{1}{T}$ for $t = T$. As $k \rightarrow \infty$, for $\{\delta_t\}_{t \in [T]}$ such that $\mathrm{sgn}(\delta_t) = \mathrm{sgn}\big(\frac{\gamma^{T - t}(1 - \gamma)}{1 - \gamma^T} - \frac{1}{T}\big)$ and $\delta_T > 0$, we have $f(x_k) - f(x_*) = \frac{L}{2}(x_k - x_*)^2 \geq \frac{L}{2}\Big(\frac{1 - \gamma}{1 - \gamma^T} - \frac{1}{T}\Big)^2\delta_T^2 > 0$.}

For $2$), {with $\frac{1 - \gamma}{1 - \gamma^T} > \frac{1}{T}$, observe that as $\gamma \in (0, 1)$, we have $\big|\frac{\gamma^{T - t}(1 - \gamma)}{1 - \gamma^T} - \frac{1}{T}\big| \leq \frac{T - 1}{T}$ for any $t$. }
As $k \rightarrow \infty$, for $|\delta_t| < \frac{T\sqrt{2/L}}{(T - 1)^2}$ ($t \in [T - 1]$) and $\delta_T \geq \frac{2\sqrt{2/L}}{\frac{1 - \gamma}{1 - \gamma^T} - \frac{1}{T}}$: $f(x_k) - f(x_*) = \frac{L}{2}(x_k - x_*)^2 > 1$.

For $3$), the case $\eta \geq \frac{1}{TL}$ can be handled using a similar argument as in $1$), and thus we assume w.l.o.g.\ that $\gamma \geq 1 - \frac{1}{T + 1}$. Let $\gamma = 1 - \kappa$, where $\kappa > 0$ and $\kappa \leq \frac{1}{T + 1} < \frac{1}{T}$. Further noticing that $(1 - \kappa)^T \geq 1 - \kappa T + \frac{\kappa^2 T(T - 1)}{4}$ for $T \geq 2$ and $\kappa T < 1$, then we have 
\begin{align*}
    \frac{1 - \gamma}{1 - \gamma^T} = \frac{\kappa}{1 - (1 - \kappa)^T} \geq \frac{\kappa}{\kappa T - \frac{\kappa^2 T(T - 1)}{4}} \geq \frac{1}{T}\Big(1 + \frac{\kappa(T - 1)}{4}\Big) \geq \frac{1}{T} + \frac{\kappa}{8}.
\end{align*}
Hence, if $\eta \geq \frac{16\sqrt{\varepsilon}}{\sqrt{TL}\sigma_*}$, since we can show $\sigma_* \leq L\sqrt{\sum_{t = 1}^T\delta_t^2 / T}$, we have that $\kappa = \frac{\eta L}{\eta L + 1} \geq \frac{16\sqrt{\varepsilon/L}}{16\sqrt{\varepsilon/L} + \sqrt{\sum_{t = 1}^T\delta_t^2}} > \frac{8\sqrt{3\varepsilon/L}}{\sqrt{\sum_{t = 1}^T\delta_t^2}}$ with choosing large enough $\sqrt{\sum_{t = 1}^T\delta_t^2}$. Then for sufficiently large $k$ and $\{\delta_t\}_{t \in [T]}$ such that $\delta_T^2 > \frac{5}{6}\sum_{t = 1}^T \delta_t^2$ and $\mathrm{sgn}(\delta_t) = \mathrm{sgn}\big(\frac{\gamma^{T - t}(1 - \gamma)}{1 - \gamma^T} - \frac{1}{T}\big)$, we get $f(x_k) - f(x_*) \geq \frac{2L}{5}\frac{3\varepsilon\delta_T^2}{L\sum_{t = 1}^T \delta_t^2} > \varepsilon$.
\end{proof}

\paragraph{Regularization effect.} 
We now discuss how the regularization parameter (the step size of proximal updates) affects the loss on the current task and (excess) forgetting, based on the above convergence results of IPM. An interesting aspect of our result in Theorem~\ref{thm:convergence-prox-IGD-last-smooth} is that there is a critical value 
\begin{align}\label{eq:eta-critical}
      \eta_* = \min\Big\{\frac{\|\vx_0 - \vx_*\|^{2/3}}{2^{1/3}T\sigma_*^{2/3} L^{1/3} K^{1/3}(1 + \beta/\alpha)^{1/3}}, \, \frac{1}{\sqrt{\beta}TL}\Big\}
\end{align}
such that if decrease $\eta$ beyond $\eta_*$, both the regularization error on the current task and our upper bound on the forgetting increase. For $\eta > \eta_*$ (i.e., when we decrease the regularization error), Theorem~\ref{thm:mvt} shows that the forgetting would increase, at least in some regimes of the problem parameters. Moreover, Theorem~\ref{thm:mvt} demonstrates that polynomial dependence on other parameters like $1/\epsilon$ and $\sigma_*$ is necessary in the choice for $\eta.$ In other words, strong regularization is needed to control the forgetting to a target error in general smooth convex settings.
Another direct implication of Theorem~\ref{thm:mvt} is that if no assumptions such as similarity are made on the tasks, then any $\ell_2$-regularized model using a finite regularization parameter would suffer catastrophic forgetting, i.e.,\ the forgetting would not be approaching zero as the number of epochs tends to infinity.

{We further provide illustrative numerical results in Fig.~\ref{fig:num-exp} to facilitate our discussion. In particular, we choose $L = 2$, $T \in \{100, 150, 200\}$, $\delta_t = 1 / t$ ($t \in [T - 1]$) and $\delta_T = T$ for the example $f(x) = \frac{L}{2T}\sum_{t = 1}^T (x - \delta_t)^2$ used in Theorem~\ref{thm:mvt}. In Fig.~\ref{fig:num-exp}\protect\subref{fig:forgetting}, we plot the optimality gap at the last iterate, i.e.,\ the excess forgetting, against the step sizes after $K = 10^4$ epochs. It can be observed that the forgetting first decreases with reducing the step size, but then increases beyond some critical value. Note that the critical values are around $10^{-5}$, which is  nontrivially smaller than $1/L = 1/2$, while a larger $T$ leads to a smaller such critical value. These numerical examples corroborate our results from Theorems \ref{thm:convergence-prox-IGD-last-smooth} and \ref{thm:mvt}, which jointly suggest that the step size (amount of regularization) can neither be too small nor too large.
On the other hand, in Fig.~\ref{fig:num-exp}\protect\subref{fig:reg} we show the final stagnated average regularization error, i.e.,\ $\frac{1}{T}\sum_{t = 1}^T f_t(\vx_{k - 1, t + 1}) - f_t(\vx_{*, t})$ over $T$ tasks, where $\vx_{*, t}$ is the minimizer of $f_t$. We thus conclude from both plots in Fig.~\ref{fig:num-exp} that as the step size increases (equivalently, regularization parameter decreases), the regularization error decreases as well, but the forgetting increases. }
\begin{figure}[t]
    % \centering
    \hspace*{\fill}\subfloat[Forgetting]{\includegraphics[height=0.3\textwidth]{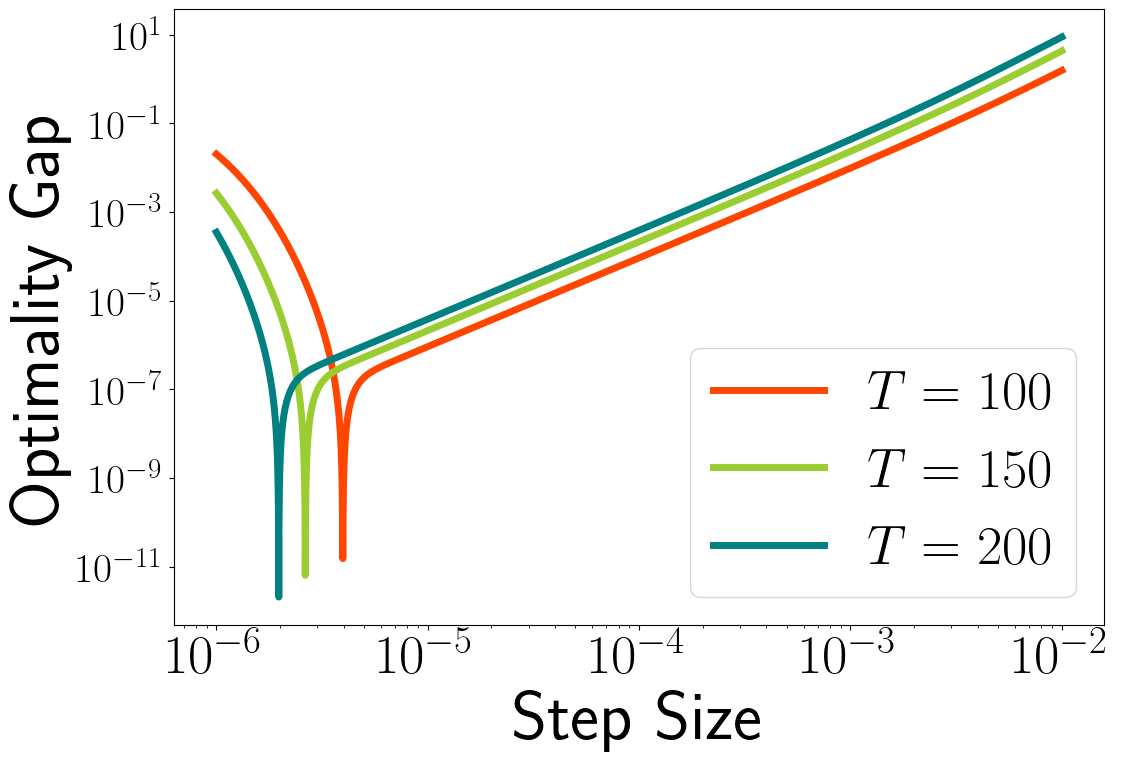}\label{fig:forgetting}}\hfill
    \subfloat[Regularization error (avg)]{\includegraphics[height=0.3\textwidth]{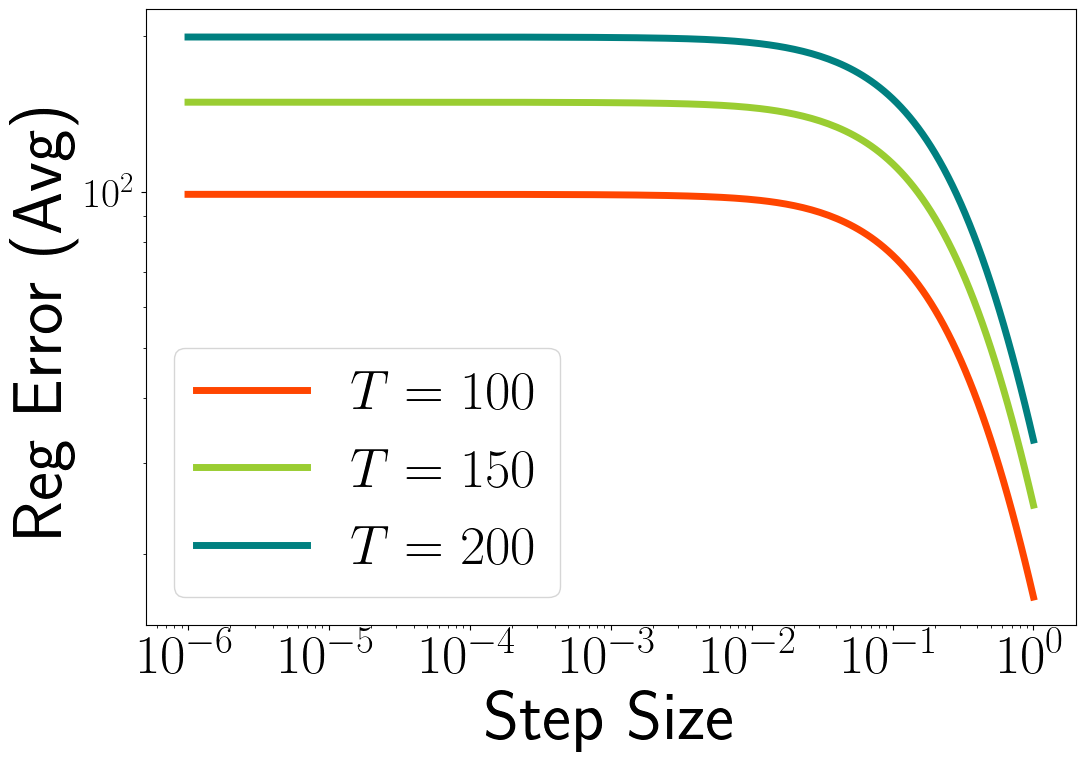}\label{fig:reg}}
    \hspace*{\fill}
    \caption{Numerical results of performing IPM on $T$ component least square functions, corresponding to the $\ell_2$-regularized CL setting of $T$ linear regression tasks with cyclic replays.}
    \label{fig:num-exp}
\end{figure}

To conclude this subsection, we finally note that our results bridge the gap of theoretically calibrating the trade-off between the forgetting and the regularization error for general convex smooth tasks with cyclic replays, which has only been studied for the setting of two linear regression tasks without cyclic replay~\citep{heckel2022provable,li2023fixed}. Further, our results on finite regularization complement the \emph{asymptotic} weak regularization results in the technically disjoint setting of linear classification~\citep{evron2023continual}, in the need of relating to the sequential max-margin projections for their analysis.

\subsection{Convex Lipschitz setting}
We now further relax the smoothness assumption and consider the convex Lipschitz setting, with applications such as linear classification tasks considered in~\citet{evron2023continual}. 
To carry out the analysis, we leverage the standard fact that the proximal iteration is equivalent to the gradient step w.r.t.\ the Moreau envelope, i.e.,\
\begin{align*}
    \vx_{k - 1, t + 1} = \vx_{k - 1, t} - \eta_k\nabla M_{\eta_k f_t}(\vx_{k - 1, t}),
\end{align*}
while the gradient of the Moreau envelope $\nabla M_{\eta_k f_t}(\vx_{k - 1, t})$ belongs to the subdifferential $\partial f_t(\vx_{k - 1, t + 1})$, thus is bounded by the Lipschitz constant of $f_t$. 
{We use these observations to bound the gap $f(\vx_k) - f(\vz)$ for each epoch and then use the sequence $\{\vz_k\}$ defined in Eq.~\eqref{eq:zk-1} to deduce the last iterate rate in the following theorem with proofs deferred to Appendix~\ref{appx:proximal-IGD}. In contrast to Lemma~\ref{lemma:IGD-cycle}
, we do not have the additional error term $f(\vz) - f(\vx_*)$, so the analysis is much simplified.}
\begin{restatable}{theorem}{convergenceProxIGD}\label{thm:convergence--prox-IGD-last-nonsmooth}
Under Assumptions~\ref{assp:convex}~and~\ref{assp:Lipschitz}, the output $\vx_K$ of Alg.~\ref{alg:prox-IGD} satisfies 
\begin{align*}
    f(\vx_K) - f(\vx_*) \leq \;& \frac{1}{2T\sum_{k = 1}^K\eta_k}\|\vx_0 - \vx_*\|^2 + \frac{G^2 T}{2}\sum_{k = 1}^K\frac{\eta_k^2}{\sum_{j = k}^K\eta_j}. 
\end{align*}
Moreover, given $\epsilon > 0$, there exists a constant step size $\eta$ such that $f(\vx_K) - f(\vx_*) \leq \epsilon$ after $\widetilde \gO\big(\frac{G^2 T\|\vx_0 - \vx_*\|^2}{\epsilon^2}\big)$ {individual proximal oracle queries.}
\end{restatable}
The last iterate rate we obtained in Theorem~\ref{thm:convergence--prox-IGD-last-nonsmooth} matches the prior best known results for the average iterate guarantees for incremental proximal methods~\citep{bertsekas2011incremental2,li2019incremental} in nonsmooth settings, up to a logarithmic factor. Further, we take the $\Theta(\frac{1}{\sqrt{K}})$ step size only for analytical simplicity, while the diminishing step sizes $\eta_k = \Theta(\frac{1}{\sqrt{k}})$ will yield the same rate via a similar analysis, which we omit for brevity.

\subsection{Inexact proximal point evaluations}\label{sec:inexact-pp}
In the last two subsections, we derived our results assuming that the proximal point operator can be evaluated exactly. However, computing the proximal point corresponds to solving a strongly convex problem, which is generally possible to do only up to finite accuracy. Thus, we now consider the case where $\vx_{k - 1, t + 1}$ is an approximation of $\mathrm{prox}_{\eta_k f_t}(\vx_{k - 1, t})$ with solving the corresponding strongly convex problem to $\varepsilon_{k - 1, t}^2 / 2\eta_k$-optimality gap for $\varepsilon_{k - 1, t} > 0$. Equivalently, using strong convexity and denoting $\vg_{k - 1, t} := \frac{1}{\eta_k}(\vx_{k - 1, t} - \vx_{k - 1, t + 1})$, we have 
\begin{align}\label{eq:pp-inexact}
    \|\vx_{k - 1, t + 1} - \mathrm{prox}_{\eta_k f_t}(\vx_{k - 1, t})\| \leq \varepsilon_{k - 1, t}, \quad \|\vg_{k - 1, t} - \nabla M_{\eta_k f_t}(\vx_{k - 1, t})\| \leq \varepsilon_{k - 1, t} / \eta_k.
\end{align}
We note that direct extensions of the previous analysis would not work, because inexact evaluations give rise to additional positive terms $\|\vx_k - \vz_{k - 1}\|^2$ that cause issues for telescoping. However, we observe that the coefficients of these terms admit additional slackness, i.e.,\ $(\lambda_{k}^2 w_{k} - w_{k - 1})\|\vx_{k} - \vz_{k - 1}\|^2$, while Lemma~\ref{lemma:zk} only requires $\lambda_k w_k \leq w_{k - 1}$. Thus, as long as the approximation error at each iteration is small, we can still maintain the convergence rate of incremental proximal methods with exact proximal point evaluations. With these insights, we extend our convergence results to admit inexact proximal point evaluations in the following corollaries, with proofs provided in Appendix~\ref{appx:proximal-IGD}.
\begin{restatable}[Convex Smooth]{corollary}{convergenceInexactSmooth}\label{thm:convergence-prox-IGD-last-smooth-inexact}
Under Assumptions~\ref{assp:convex}~and~\ref{assp:smooth},~\ref{assp:bounded-vr} and for parameters $\alpha, \beta$ such that $\frac{1}{\alpha} + \frac{1}{\beta} \leq \frac{1}{2}$, if the step size is fixed and satisfies $\eta_k \equiv \eta \leq \frac{1}{\sqrt{\beta} T L}$, the output $\vx_K$ of Alg.~\ref{alg:prox-IGD} with inexact proximal point evaluations as in Eq.~\eqref{eq:pp-inexact} with $\sum_{t = 1}^T \varepsilon_{k - 1, t} \leq \frac{\sqrt{\eta}}{1 + (1 + \alpha / \beta)(K - k + 1)}$ satisfies 
\begin{align*}
    f(\vx_K) - f(\vx_*) \leq \;& \frac{\e\|\vx_0 - \vx_*\|^2}{2\eta T K^{\frac{1}{1 + \alpha/\beta}}} + 2\eta^2 T^2 \sigma_*^2 L (1 + \beta / \alpha)K^{\frac{\alpha / \beta}{1 + \alpha / \beta}} + \frac{\e}{2\eta T}\sum_{k = 0}^{K - 1} \sum_{t = 1}^T \frac{2T \varepsilon_{k, t}^2 + \sqrt{\eta}\varepsilon_{k, t}}{(K - k)^{\frac{1}{1 + \alpha / \beta}}}. 
\end{align*}
Given $\epsilon > 0$, if $\sum_{t = 1}^T\varepsilon_{k - 1, t} \leq \sqrt{\eta}\min\{\varepsilon, \frac{1}{3(K - k + 1)}\}$, there exists $\eta$ such that $f(\vx_K) - f(\vx_*) \leq \epsilon$ after $\widetilde \gO\big(\frac{TL\|\vx_0 - \vx_*\|^2}{\epsilon} + \frac{T L^{1/2}\sigma_*\|\vx_0 - \vx_*\|^2}{\epsilon^{3/2}}\big)$ {individual inexact proximal point evaluations.}
\end{restatable}

\begin{restatable}[Convex Lipschitz]{corollary}{convergenceInexactNonsmooth}\label{thm:convergence-prox-IGD-last-nonsmooth-inexact}
Under Assumptions~\ref{assp:convex}~and~\ref{assp:Lipschitz}, the output $\vx_K$ of Alg.~\ref{alg:prox-IGD} with inexact proximal point evaluations as in Eq.~\eqref{eq:pp-inexact} with $\sum_{t = 1}^T \varepsilon_{k - 1, t} \leq \frac{\eta_k \eta_{k - 1} G T}{\sum_{j = k}^K \eta_j}$ satisfies 
\begin{align*}
    f(\vx_K) - f(\vx_*) \leq \frac{\|\vx_0 - \vx_*\|^2}{2T\sum_{k = 1}^K\eta_k} + \frac{G^2 T}{2}\sum_{k = 1}^K\frac{\eta_k^2}{\sum_{j = k}^K\eta_j} + \sum_{k = 1}^K\sum_{t = 1}^T \Big(\frac{\varepsilon_{k - 1, t}^2}{2T\sum_{j = k}^K\eta_j} + \frac{3G\varepsilon_{k - 1, t}\eta_k}{\sum_{j = k}^K\eta_j}\Big). 
\end{align*}
Given $\epsilon > 0$, if $\sum_{t = 1}^T\varepsilon_{k - 1, t} \leq \frac{2\eta G T}{K - k + 1}
$, there exists a constant step size $\eta$ such that $f(\vx_K) - f(\vx_*) \leq \epsilon$ after $\widetilde \gO\big(\frac{G^2 T\|\vx_0 - \vx_*\|^2}{\epsilon^2}\big)$ {individual inexact proximal point evaluations.}
\end{restatable}

\section{Conclusion}
This work provides the first oracle complexity guarantees for the last iterate of standard incremental (gradient {and proximal}) methods, motivated by applications in continual learning. The obtained complexity bounds nearly match the best known oracle complexity bounds that in the same settings were previously known only for the (uniformly) average iterate. Our for the incremental proximal method further characterize the effect of  regularization  and its limitations in controlling catastrophic forgetting in continual learning applications. It would be interesting to investigate in future work whether other types of regularization involving task similarity can effectively control forgetting.

\section*{Acknowledgments}
This research was supported in part by the U.S. Office of Naval Research under
contract number N00014-22-1-2348.

\bibliographystyle{plainnat}
\bibliography{ref}

\newpage
\appendix
\section{Omitted Proofs From Section~\ref{sec:IGD}}\label{appx:IGD}

\IGDCycle*
\begin{proof}
Since each $f_t$ is convex and $L$-smooth, we have for $t \in [T]$ (see Eq.~\eqref{eq:smooth+cvx}):
\begin{align}
    f_t(\vx_k) - f_t(\vx_{k - 1, t}) \leq \;& \innp{\nabla f_t(\vx_{k}), \vx_{k} - \vx_{k - 1, t}} - \frac{1}{2L}\|\nabla f_t(\vx_{k}) - \nabla f_t(\vx_{k - 1, t})\|^2, \label{eq:IGD-upper-bound} \\ 
    f_t(\vx_{k - 1, t}) - f_t(\vz) \leq \;& \innp{\nabla f_t(\vx_{k - 1, t}), \vx_{k - 1, t} - \vz} - \frac{1}{2L}\|\nabla f_t(\vx_{k - 1, t}) - \nabla f_t(\vz)\|^2. \label{eq:IGD-lower-bound}
\end{align}
On the other hand, letting $\Phi_{k - 1, t}(\vx) := \innp{\nabla f_t(\vx_{k - 1, t}), \vx} + \frac{1}{2\eta_k}\|\vx_{k - 1, t} - \vx\|^2$, we have $\nabla \Phi_{k - 1, t}(\vx_{k - 1, t + 1}) = \mathbf{0}$ by the update $\vx_{k - 1, t + 1} = \vx_{k - 1, t} - \eta_k \nabla f_{t}(\vx_{k - 1, t})$ in Alg.~\ref{alg:IGD}. Observe that $\Phi_{k - 1, t}$ is $\frac{1}{\eta_k}$-strong convex, and thus we also have 
\begin{align}\label{eq:IGD-update-bound}
    \Phi_{k - 1, t}(\vz) \geq \Phi_{k - 1, t}(\vx_{k - 1, t + 1}) + \frac{1}{2\eta_k}\|\vz - \vx_{k - 1, t + 1}\|^2.
\end{align}

Summing Eq.~\eqref{eq:IGD-upper-bound}~and~\eqref{eq:IGD-lower-bound} and using Eq.~\eqref{eq:IGD-update-bound}, we conclude that for $t \in [T]$,
\begin{align*}
    \;& f_t(\vx_{k}) - f_t(\vz) \\
    \leq \;& \innp{\nabla f_t(\vx_{k}), \vx_{k} - \vx_{k - 1, t}} + \innp{\nabla f_t(\vx_{k - 1, t}), \vx_{k - 1, t} - \vx_{k - 1, t + 1}} \\
    & - \frac{1}{2L}\|\nabla f_t(\vx_{k}) - \nabla f_t(\vx_{k - 1, t})\|^2 - \frac{1}{2L}\|\nabla f_t(\vx_{k - 1, t}) - \nabla f_t(\vz)\|^2 \\
    & - \frac{1}{2\eta_k}\|\vx_{k - 1, t + 1} - \vx_{k - 1, t}\|^2 + \frac{1}{2\eta_k}\big(\|\vx_{k - 1, t} - \vz\|^2 - \|\vx_{k - 1, t + 1} - \vz\|^2\big).
\end{align*}
Decomposing $\nabla f_t(\vx_{k}) = \nabla f_t(\vx_{k}) - \nabla f_t(\vx_{k - 1, t}) + \nabla f_t(\vx_{k - 1, t})$ and 
summing over $t \in [T]$ where we recall $\vx_{k - 1} = \vx_{k - 1, 1}$ and $\vx_k = \vx_{k - 1, T + 1}$, we have 
\begin{align*}
    T\big(f(\vx_{k}) - f(\vz)\big) \leq \;& \underbrace{\sum_{t = 1}^T\innp{\nabla f_t(\vx_{k - 1, t}), \vx_{k} - \vx_{k - 1, t + 1}} - \frac{1}{2\eta_k}\sum_{t = 1}^T\|\vx_{k - 1, t + 1} - \vx_{k - 1, t}\|^2}_{\gT_1} \\
    & + \underbrace{\sum_{t = 1}^T\innp{\nabla f_t(\vx_{k}) - \nabla f_t(\vx_{k - 1, t}), \vx_{k} - \vx_{k - 1, t}}}_{\gT_2} \\
    & - \frac{1}{2L}\sum_{t = 1}^T\|\nabla f_t(\vx_{k}) - \nabla f_t(\vx_{k - 1, t})\|^2 - \frac{1}{2L}\sum_{t = 1}^T\|\nabla f_t(\vx_{k - 1, t}) - \nabla f_t(\vz)\|^2 \\
    & + \frac{1}{2\eta_k}\big(\|\vx_{k - 1} - \vz\|^2 - \|\vx_{k} - \vz\|^2\big).
\end{align*}
For the term $\gT_1$, we recall that by the IGD update, $\nabla f_t(\vx_{k - 1, t}) = -\frac{1}{\eta_k}(\vx_{k - 1, t + 1} - \vx_{k - 1, t})$ and $\vx_{k} - \vx_{k - 1, t + 1} = \sum_{s = t + 1}^T (\vx_{k - 1, s + 1} - \vx_{k - 1, s})$, and thus we have 
\begin{align*}
    \gT_1 = \;& -\frac{1}{\eta_k}\sum_{t = 1}^{T - 1} \sum_{s = t + 1}^T \innp{\vx_{k - 1, t + 1} - \vx_{k - 1, t}, \vx_{k - 1, s + 1} - \vx_{k - 1, s}} - \frac{1}{2\eta_k}\sum_{t = 1}^T\|\vx_{k - 1, t + 1} - \vx_{k - 1, t}\|^2 \\
    = \;& -\frac{1}{2\eta_k}\Big\|\sum_{t = 1}^T(\vx_{k - 1, t + 1} - \vx_{k - 1, t})\Big\|^2 = -\frac{1}{2\eta_k}\|\vx_{k} - \vx_{k - 1}\|^2 \leq 0. 
\end{align*}
For the term $\gT_2$, noticing that $\vx_{k} - \vx_{k - 1, t} = -\eta_k\sum_{s = t}^T \nabla f_s(\vx_{k - 1, s})$ and decomposing $\nabla f_s(\vx_{k - 1, s}) = (\nabla f_s(\vx_{k - 1, s}) - \nabla f_s(\vz)) + (\nabla f_s(\vz) - \nabla f_s(\vx_*)) + \nabla f_s(\vx_*)$, we use Young's inequality with parameters $\alpha > 0$ and $\beta > 0$ to obtain 
\begin{align*}
    \gT_2 = \;& \sum_{t = 1}^T \Big\langle\nabla f_t(\vx_{k}) - \nabla f_t(\vx_{k - 1, t}), -\eta_k\sum_{s = t}^T\nabla f_s(\vx_{k - 1, s})\Big\rangle \\
    \leq \;& \frac{1}{2L}\Big(\frac{1}{2} + \frac{1}{\alpha} + \frac{1}{\beta}\Big)\sum_{t = 1}^T\|\nabla f_t(\vx_k) - \nabla f_t(\vx_{k - 1, t})\|^2 \\
    & + \frac{\alpha\eta_k^2 L}{2}\sum_{t = 1}^T \Big\|\sum_{s = t}^T \big(\nabla f_s(\vz) - \nabla f_s(\vx_*)\big)\Big\|^2 + \eta_k^2 L \sum_{t = 1}^T \Big\|\sum_{s = t}^T\nabla f_s(\vx_*)\Big\|^2 \\
    & + \frac{\beta\eta_k^2 L}{2}\sum_{t = 1}^T \Big\|\sum_{s = t}^T\big( \nabla f_s(\vx_{k - 1, s}) - \nabla f_s(\vz)\big)\Big\|^2.
\end{align*}
Further using that $\|\sum_{i = 1}^n \vx_i\|^2 \leq n\sum_{i = 1}^n\|\vx_i\|^2$ 
and combining the above bounds on $\gT_1$ and $\gT_2$, we obtain 
\begin{align*}
    T\big(f(\vx_{k}) - f(\vz)\big) \leq \;& \frac{1}{2L}\Big(\frac{1}{\alpha} + \frac{1}{\beta} - \frac{1}{2}\Big)\sum_{t = 1}^T\|\nabla f_t(\vx_k) - \nabla f_t(\vx_{k - 1, t})\|^2 \\
    & + \Big(\frac{\beta \eta_k^2 T^2 L}{2} - \frac{1}{2L}\Big)\sum_{t = 1}^T\|\nabla f_t(\vx_{k - 1, t}) - \nabla f_t(\vz)\|^2 \\
    & + \frac{\alpha \eta_k^2 T^2 L}{2}\sum_{t = 1}^T\|\nabla f_t(\vz) - \nabla f_t(\vx_*)\|^2 + \eta_k^2 L \sum_{t = 1}^T\Big\|\sum_{s = t}^T\nabla f_s(\vx_*)\Big\|^2 \\
    & + \frac{1}{2\eta_k}\big(\|\vx_{k - 1} - \vz\|^2 - \|\vx_{k} - \vz\|^2\big),
\end{align*}
To make the first two terms on the right-hand side both nonpositive, we choose 
\begin{align*}
    \frac{1}{\alpha} + \frac{1}{\beta} \leq \frac{1}{2}, \quad \eta_k \leq \frac{1}{\sqrt{\beta}TL}.
\end{align*} 
We further
bound the term $\sum_{t = 1}^T \|\nabla f_t(\vz) - \nabla f_t(\vx_*)\|^2$ by the smoothness and convexity of each $f_t$ as follows: 
\begin{align*}
    \sum_{t = 1}^T \|\nabla f_t(\vz) - \nabla f_t(\vx_*)\|^2 \leq \;& 2L\sum_{t = 1}^T \big(f_t(\vz) - f_t(\vx_*) - \innp{\nabla f_t(\vx_*), \vz - \vx_*}\big) \\
    = \;& 2TL \big(f(\vz) - f(\vx_*)\big),
\end{align*}
where in the last equation we used $\nabla f(\vx_*) = \mathbf{0}$. 
Hence, 
we finally obtain 
\begin{align*}
    T\big(f(\vx_{k}) - f(\vz)\big) \leq \;& \eta_k^2 L \sum_{t = 1}^T \Big\|\sum_{s = t}^T\nabla f_s(\vx_*)\Big\|^2 + \alpha T^3\eta_k^2 L^2 \big(f(\vz) - f(\vx_*)\big) \\
    & + \frac{1}{2\eta_k}\big(\|\vx_{k - 1} - \vz\|^2 - \|\vx_{k} - \vz\|^2\big) \\
    \leq \;& \eta_k^2 L \sum_{t = 1}^T \Big\|\sum_{s = t}^T\nabla f_s(\vx_*)\Big\|^2 + \frac{\alpha}{\beta} T\big(f(\vz) - f(\vx_*)\big) \\
    & + \frac{1}{2\eta_k}\big(\|\vx_{k - 1} - \vz\|^2 - \|\vx_{k} - \vz\|^2\big), 
\end{align*}
where we used $\eta_k \leq \frac{1}{\sqrt{\beta}TL}$ in the last inequality, thus completing the proof.
\end{proof}

\zk*
\begin{proof}
$ $\\
\begin{enumerate}[wide=0pt, leftmargin=\parindent]
\item Using Eq.~\eqref{eq:zk-2} and the convexity of $f$, we have 
\begin{align*}
    \;& f(\vz_{k - 1}) - f(\vx_*) \\
    \leq \;& (1 - \lambda_{k - 1})\big(f(\vx_{k - 1}) - f(\vx_*)\big)+ \sum_{j = 0}^{k - 2}\big(\prod_{i = j + 1}^{k - 1}\lambda_i\big)(1 - \lambda_j)\big(f(\vx_j) - f(\vx_*)\big).
\end{align*}
It remains to multiply by $w_{k - 1}$ on both sides and notice that by the lemma assumption,  
\begin{align*}
    w_{k - 1}\big(\prod_{i = j + 1}^{k - 1}\lambda_i\big)(1 - \lambda_j) \leq w_{k - 2}\big(\prod_{i = j + 1}^{k - 2}\lambda_i\big)(1 - \lambda_j) \leq \cdots \leq w_j(1 - \lambda_j).
\end{align*}
\item This follows from the first part of the lemma, by decomposing $f(\vx_k) - f(\vz_{k - 1}) = f(\vx_k) - f(\vx_*) - \big(f(\vz_{k - 1}) - f(\vx_*)\big)$.
\end{enumerate}
\end{proof}

\convergenceIGD*
\begin{proof}
Plugging $\vz_{k - 1}$ defined by Eq.~\eqref{eq:zk-1} into
Lemma~\ref{lemma:IGD-cycle} and using the inequality
$\|\sum_{i = 1}^n \vx_i\|^2 \leq n\sum_{i = 1}^n\|\vx_i\|^2$ 
to bound the term $\sum_{t = 1}^T\|\sum_{s = t}^T\nabla f_s(\vx_*)\|^2$, we obtain 
\begin{align*}
    T\big(f(\vx_{k}) - f(\vz_{k - 1})\big) \leq \;& T^3\eta_k^2 \sigma_*^2 L + \frac{\alpha}{\beta} T\big(f(\vz_{k - 1}) - f(\vx_*)\big) \\
    & + \frac{1}{2\eta_k}\big(\|\vx_{k - 1} - \vz_{k - 1}\|^2 - \|\vx_{k} - \vz_{k - 1}\|^2\big), 
\end{align*}
where $\frac{1}{\alpha} + \frac{1}{\beta} \leq \frac{1}{2}$. Multiplying $\eta_k w_{k - 1}$ on both sides with $w_k$ such that $\lambda_{k}w_{k} \leq w_{k-1}$ and noticing that $\|\vx_{k - 1} - \vz_{k - 1}\|^2 \leq \lambda_{k - 1}\|\vx_{k - 1} - \vz_{k - 2}\|^2$ by Eq.~\eqref{eq:zk-1}, we have 
\begin{align*}
    T\eta_k w_{k - 1}\big(f(\vx_{k}) - f(\vz_{k - 1})\big) \leq \;& T^3\eta_k^3 \sigma_*^2 L w_{k - 1} + \frac{\alpha}{\beta} T\eta_k w_{k - 1}\big(f(\vz_{k - 1}) - f(\vx_*)\big) \\
    & + \frac{1}{2}\big(w_{k - 2}\|\vx_{k - 1} - \vz_{k - 2}\|^2 - w_{k - 1}\|\vx_{k} - \vz_{k - 1}\|^2\big)
\end{align*}
We then sum over $k \in [K]$ and use Lemma~\ref{lemma:zk} to obtain 
\begin{equation}\label{eq:before-unrolling}
\begin{aligned}
    \;& T \sum_{k = 1}^K \eta_k\Big[w_{k - 1}\big(f(\vx_k) - f(\vx_*)\big) - \sum_{j = 0}^{k - 1}w_j(1 - \lambda_j)\big(f(\vx_j) - f(\vx_*)\big)\Big] \\
    \leq \;& T^3\sigma_*^2 L \sum_{k = 1}^K\eta_k^3w_{k - 1} + \frac{w_{-1}}{2}\|\vx_0 - \vx_*\|^2 + \frac{\alpha}{\beta} T\sum_{k = 1}^K\eta_k\sum_{j = 0}^{k - 1}w_{j}(1 - \lambda_{j})\big(f(\vx_{j}) - f(\vx_*)\big), 
\end{aligned}
\end{equation}
where we also use $\vz_{-1} = \vx_*$. 
Unrolling the terms w.r.t.\ $f(\vx_k) - f(\vx_*)$ $(k \in [K])$ we get
\begin{equation}\label{eq:sum-unroll}
\begin{aligned}
    \;& \sum_{k = 1}^K \eta_k\Big[w_{k - 1}\big(f(\vx_k) - f(\vx_*)\big) - \sum_{j = 0}^{k - 1}w_j(1 - \lambda_j)\big(f(\vx_j) - f(\vx_*)\big)\Big] \\
    = \;& \eta_K w_{K - 1}\big(f(\vx_K) - f(\vx_*)\big) - w_0(1 - \lambda_0)\big(f(\vx_0) - f(\vx_*)\big)\sum_{k = 1}^K\eta_k \\
    & + \sum_{k = 1}^{K - 1}\Big(\eta_k w_{k - 1} - w_k (1 -\lambda_k)\sum_{j = k + 1}^K \eta_j\Big)\big(f(\vx_k) - f(\vx_*)\big)
\end{aligned}
\end{equation}
and 
\begin{align*}
    \sum_{k = 1}^K\eta_k\sum_{j = 0}^{k - 1}w_{j}(1 - \lambda_{j})\big(f(\vx_{j}) - f(\vx_*)\big) = \sum_{k = 0}^{K - 1}\Big( w_k(1 - \lambda_k)\sum_{j = k + 1}^K \eta_j \Big)\big(f(\vx_k) - f(\vx_*)\big). 
\end{align*}
Plugging back into Eq.~\eqref{eq:before-unrolling}, grouping the like terms, and choosing $\lambda_0 = 1$, we obtain 
\begin{equation}\label{eq:IGD-bound-sum}
\begin{aligned}
    \;& T\eta_K w_{K - 1}\big(f(\vx_K) - f(\vx_*)\big) \\
    & + T\sum_{k = 1}^{K - 1}\Big[\eta_k w_{k - 1} - \big(1 + \frac{\alpha}{\beta}\big) w_k(1 - \lambda_k)\sum_{j = k + 1}^K \eta_j\Big]\big(f(\vx_k) - f(\vx_*)\big) \\
    \leq \;& T^3\sigma_*^2 L \sum_{k = 1}^K\eta_k^3w_{k - 1} + \frac{w_{-1}}{2}\|\vx_0 - \vx_*\|^2.
\end{aligned}
\end{equation}
To obtain the last iterate guarantee, it suffices to choose $w_k$ and $\lambda_k$ such that 
\begin{align}
    \lambda_k w_k \leq \;& w_{k - 1}, \quad 0 \leq k \leq K - 1, \label{eq:ineq-1}\\
    \eta_k w_{k - 1} - \big(1 + \frac{\alpha}{\beta}\big) w_k(1 - \lambda_k)\sum_{j = k + 1}^K \eta_j \geq \;& 0, \quad 1 \leq k \leq K - 1 \label{eq:ineq-2}.
\end{align}
Noticing that Eq.~\ref{eq:ineq-2} is equivalent to $\lambda_k \geq 1 - \frac{\eta_k w_{k - 1}}{\big(1 + \frac{\alpha}{\beta}\big)w_k\sum_{j = k + 1}^K \eta_j}$, 
to have both inequalities satisfied at the same time, it suffices that 
\begin{align*}
    1 - \frac{\eta_k w_{k - 1}}{\big(1 + \frac{\alpha}{\beta}\big)w_k\sum_{j = k + 1}^K \eta_j} \leq \frac{w_{k - 1}}{w_k} \iff w_{k} \leq \frac{\eta_k + \big(1 + \frac{\alpha}{\beta}\big)\sum_{j = k + 1}^K\eta_j}{\big(1 + \frac{\alpha}{\beta}\big)\sum_{j = k + 1}^K\eta_j}w_{k - 1}.
\end{align*}
To maximize the growth rate of $\{w_k\}$, we let $w_{k} = \frac{\eta_k + (1 + \frac{\alpha}{\beta})\sum_{j = k + 1}^K\eta_j}{(1 + \frac{\alpha}{\beta})\sum_{j = k + 1}^K\eta_j}w_{k - 1}$. Without loss of generality, we take $w_{-1} = w_0 = \prod_{k = 1}^{K - 1}\frac{(1 + \frac{\alpha}{\beta})\sum_{j = k + 1}^K \eta_j}{\eta_k + (1 + \frac{\alpha}{\beta})\sum_{j = k + 1}^K \eta_j}$, and thus $w_{K - 1} = 1$.
Hence, dividing both sides of Eq.~\eqref{eq:IGD-bound-sum} by $T\eta_K w_{K - 1}$ and choosing the constant step size $\eta_k \equiv \eta$ for all $k \in [K]$, we obtain 
\begin{align}\label{eq:IGD-final-bound}
    f(\vx_K) - f(\vx_*) \leq \;& \eta^2T^2\sigma_*^2 L \sum_{k = 1}^K w_{k - 1} + \frac{w_{-1}}{2\eta T}\|\vx_0 - \vx_*\|^2. 
\end{align}
We first bound $w_{-1} = \prod_{k = 1}^{K - 1}\frac{(1 + \frac{\alpha}{\beta})(K - k)}{1 + (1 + \frac{\alpha}{\beta})(K - k)} = \prod_{k = 1}^{K - 1}\frac{(1 + \frac{\alpha}{\beta})k}{1 + (1 + \frac{\alpha}{\beta})k}$ with the constant step size. Taking the natural logarithm of $w_{-1}$, we have 
\begin{align*}
    \sum_{k = 1}^{K - 1}\log\Big(1 - \frac{1}{1 + \big(1 + \frac{\alpha}{\beta}\big)k}\Big) \overset{(\romannumeral1)}{\leq} -\sum_{k = 1}^{K - 1}\frac{1}{1 + \big(1 + \frac{\alpha}{\beta}\big)k} \leq -\frac{1}{1 + \frac{\alpha}{\beta}}\sum_{k = 1}^{K - 1}\frac{1}{k + 1}.
\end{align*}
where for $(\romannumeral1)$ we use the fact that $\log(1 + x) \leq x$ for $x > -1$. Further noticing that $\sum_{k = 1}^{K - 1}\frac{1}{k + 1} = \sum_{k = 1}^K \frac{1}{k} - 1 \geq \log(K) + \frac{1}{K} - 1$, then we have 
\begin{align*}
    \log(w_{-1}) \leq -\frac{1}{1 + \frac{\alpha}{\beta}}\log(K) + \frac{1}{1 + \frac{\alpha}{\beta}} \iff w_{-1} \leq \frac{\e^{1/(1 + \alpha/\beta)}}{K^{\frac{1}{1 + \alpha/\beta}}} \leq \frac{\e}{K^{\frac{1}{1 + \alpha/\beta}}}.
\end{align*}
On the other hand, we note that $w_{k - 1} = \prod_{j = k}^{K - 1}\frac{(1 + \frac{\alpha}{\beta})(K - j)}{1 + (1 + \frac{\alpha}{\beta})(K - j)} = \prod_{j = 1}^{K - k}\frac{(1 + \frac{\alpha}{\beta})j}{1 + (1 + \frac{\alpha}{\beta})j}$ for $1 \leq k \leq K - 1$ and $w_{K - 1} = 1$, then we follow the above argument and obtain 
\begin{align*}
    \sum_{k = 1}^K w_{k - 1} = \;& 1 + \sum_{k = 1}^{K - 1} \prod_{j = 1}^{K - k}\frac{(1 + \frac{\alpha}{\beta})j}{1 + (1 + \frac{\alpha}{\beta})j} \\
    \leq \;& 1 + \sum_{k = 1}^{K - 1}\frac{\e}{(K - k + 1)^{\frac{1}{1 + \alpha/\beta}}} \overset{(\romannumeral1)}{\leq} \frac{\e(1 + \alpha/\beta)}{\alpha/\beta}K^{\frac{\alpha/\beta}{1 + \alpha/\beta}}, 
\end{align*}
where $(\romannumeral1)$ is due to $\sum_{k = 2}^K \frac{1}{k^q} \leq \int_{2}^{K + 1}\frac{1}{(x - 1)^q}dx = \frac{K^{1 - q} - 1}{1 - q}$ for any $0 < q < 1$.
Hence, we obtain the final bound: 
\begin{align*}
    f(\vx_K) - f(\vx_*) \leq \;& \e\eta^2T^2\sigma_*^2 L (1 + \beta/\alpha)K^{\frac{\alpha/\beta}{1 + \alpha/\beta}} + \frac{\e\|\vx_0 - \vx_*\|^2}{2T\eta K^{\frac{1}{1 + \alpha/\beta}}}.
\end{align*}
To analyze the oracle complexity, we take 
\begin{align*}
\eta = \min\Big\{\frac{\|\vx_0 - \vx_*\|^{2/3}}{2^{1/3}T\sigma_*^{2/3} L^{1/3} K^{1/3}(1 + \beta/\alpha)^{1/3}}, \, \frac{1}{\sqrt{\beta}T L}\Big\}
\end{align*}
and analyze the two possible cases depending on which term in the min is smaller. If the first term in the min is smaller (which we can equivalently think of as $K$ being ``large''), we get 
\begin{align*}
    f(\vx_K) - f(\vx_*) \leq \;& \frac{3.5(1 + \beta/\alpha)^{1/3} L^{1/3}\sigma_*^{2/3}\|\vx_0 - \vx_*\|^{4/3}}{K^{\frac{1}{1 + \alpha/\beta} - \frac{1}{3}}}.
\end{align*}
Alternatively, if $\eta = \frac{1}{\sqrt{\beta} T L} \leq \frac{\|\vx_0 - \vx_*\|^{2/3}}{2^{1/3}T\sigma_*^{2/3} L^{1/3} K^{1/3}(1 + \beta/\alpha)^{1/3}}
$ (which we can think of as having ``small'' $K$), we obtain 
\begin{align*}
    f(\vx_K) - f(\vx_*) \leq \;& \frac{1.8 (1 + \beta/\alpha)^{1/3} L^{1/3}\sigma_*^{2/3}\|\vx_0 - \vx_*\|^{4/3}}{K^{\frac{1}{1 + \alpha/\beta} - \frac{1}{3}}} + \frac{1.4\sqrt{\beta}L\|\vx_0 - \vx_*\|^2}{K^{\frac{1}{1 + \alpha/\beta}}}.
\end{align*}
Hence, combining these two cases, we have 
\begin{align*}
    f(\vx_K) - f(\vx_*) \leq & \frac{3.5 (1 + \beta/\alpha)^{1/3} L^{1/3}\sigma_*^{2/3}\|\vx_0 - \vx_*\|^{4/3}}{K^{\frac{1}{1 + \alpha/\beta} - \frac{1}{3}}} + \frac{1.4\sqrt{\beta}L\|\vx_0 - \vx_*\|^2}{K^{\frac{1}{1 + \alpha/\beta}}}.
\end{align*}
In particular, if we choose $\alpha = 4$ and $\beta = 4\log K$, assuming without loss of generality that $\log K > 1$, then we have $K^{\frac{\alpha/\beta}{1 + \alpha/\beta}} = K^{\frac{1}{\log K + 1}} \leq \e$ and 
thus 
\begin{align*}
    f(\vx_K) - f(\vx_*) \leq \frac{9.4 L^{\frac{1}{3}}\sigma_*^{\frac{2}{3}}\|\vx_0 - \vx_*\|^{\frac{4}{3}}(1 + \log K)^{\frac{1}{3}}}{K^{\frac{2}{3}}} + \frac{7.4 L \|\vx_0 - \vx_*\|^2\sqrt{\log K}}{K}.
\end{align*}
To guarantee $f(\vx_K) - f(\vx_*) \leq \epsilon$ given $\epsilon > 0$, the total number of individual gradient evaluations will be 
\begin{align*}
    TK = \widetilde \gO\Big(\frac{TL\|\vx_0 - \vx_*\|^2}{\epsilon} + \frac{T L^{1/2}\sigma_*\|\vx_0 - \vx_*\|^2}{\epsilon^{3/2}}\Big).
\end{align*}
\end{proof}

\weightedAverage*
\begin{proof}
We follow the proof of Theorem~\ref{thm:convergence-IGD-last} up to Eq.~\eqref{eq:IGD-bound-sum} with constant step size $\eta$, then we instead take $\lambda_k = w_{k - 1} / w_k$ and $w_k = \frac{(1 + \frac{\alpha}{\beta})(K - k) + 1 - c}{(1 + \frac{\alpha}{\beta})(K - k)} w_{k - 1}$ to obtain 
\begin{align*}
    c\eta T\sum_{k = 1}^K w_{k - 1}\big(f(\vx_k) - f(\vx_*)\big) \leq T^3 \eta^3 \sigma_*^2 L \sum_{k = 1}^K  w_{k - 1} + \frac{w_{-1}}{2}\|\vx_0 - \vx_*\|^2.
\end{align*}
Since $f$ is convex, we have $f(\hat \vx_K) - f(\vx_*) \leq \frac{\sum_{k = 1}^K w_{k - 1}(f(\vx_k) - f(\vx_*))}{\sum_{k = 1}^K w_{k - 1}}$ where $\hat \vx_K = \sum_{k = 1}^K\frac{w_{k - 1}\vx_k}{\sum_{k = 1}^K w_{k - 1}}$ is the increasing weighted averaging of $\{\vx_k\}_{k = 1}^K$, thus ({\it cf.} Eq.~\eqref{eq:IGD-final-bound}) 
\begin{align*}
    f(\hat \vx_K) - f(\vx_*) \leq \frac{T^2\sigma_*^2 \eta^2 L}{c} + \frac{\|\vx_0 - \vx_*\|^2w_{-1}}{2c\eta T\sum_{k = 1}^K w_{k - 1}} \leq \frac{T^2\sigma_*^2 \eta^2 L}{c} + \frac{\|\vx_0 - \vx_*\|^2}{2c\eta T K}, 
\end{align*}
where the last step is due to $\sum_{k = 1}^K w_{k - 1} \geq K w_{-1}$. Then we follow the proof of Theorem~\ref{thm:convergence-IGD-last} and choose $\eta = \min\{\frac{1}{\sqrt{\beta}TL}, (\frac{\|\vx_0 - \vx_*\|^2}{2T^3\sigma_*^2 L K})^{1/3}\}$ to obtain 
\begin{align*}
    f(\hat \vx_K) - f(\vx_*) \leq \frac{\sqrt{\beta}L\|\vx_0 - \vx_*\|^2}{2 c K} + \frac{2^{1/3} L^{1/3}\sigma_*^{2/3}\|\vx_0 - \vx_*\|^{4/3}}{cK^{2/3}}.
\end{align*}
To guarantee $f(\vx_K) - f(\vx_*) \leq \epsilon$ for $\epsilon > 0$, we choose $\beta = \gO(1)$ and the total number of individual gradient evaluations will be 
\begin{align*}
    TK = \gO\Big(\frac{TL\|\vx_0 - \vx_*\|^2}{c\epsilon} + \frac{T L^{1/2}\sigma_*\|\vx_0 - \vx_*\|^2}{c^{3/2}\epsilon^{3/2}}\Big), 
\end{align*}
thus finishing the proof.
\end{proof}

\shuffleSGD*
\begin{proof}
We first introduce a lemma to bound the term $\sum_{t = 1}^T\|\sum_{s = t}^T\nabla f_s(\vx_*)\|^2$ in Lemma~\ref{lemma:IGD-cycle}
% Eq.~\eqref{eq:IGD-cycle-bound} 
with random permutations involved. Its proof is provided in Appendix~\ref{appx:IGD}.
\begin{restatable}{lemma}{vrShuffle}\label{lemma:shuffle}
Under Assumptions~\ref{assp:bounded-vr} and for Alg.~\ref{alg:IGD} with uniformly random (SO/RR) shuffling, we have 
\begin{align*}
    \textstyle \E\big[\sum_{t = 1}^T\|\sum_{s = t}^T\nabla f_s(\vx_*)\|^2\big] \leq \frac{T(T + 1)}{6}\sigma_*^2.
\end{align*}
\end{restatable}
We then take expectation w.r.t.\ all the randomness on both sides of the inequality in Lemma~\ref{lemma:IGD-cycle}
% Eq.~\eqref{eq:IGD-cycle-bound} 
and use Lemma~\ref{lemma:shuffle}. Then it remains to follow the analysis in the proof of Theorem~\ref{thm:convergence-IGD-last}.
\end{proof}

\vrShuffle*
\begin{proof}
We first consider the case of random reshuffling strategy. Conditional on all the randomness up to but not including $k$-th epoch, the only randomness of $\E_k[\sum_{t = 1}^T\|\sum_{s = t}^T\nabla f_s(\vx_*)\|^2]$ comes from the permutation $\pi^{(k)}$ at $k$-th epoch. Further noticing that each partial sum $\sum_{s = t}^T\nabla f_s(\vx_*)$ can be seen as a batch sampled without replacement from $\{\nabla f_t(\vx_*)\}_{t \in [T]}$, we have 
\begin{align*}
    \E_k\Big[\sum_{t = 1}^T\Big\|\sum_{s = t}^T\nabla f_s(\vx_*)\Big\|^2\Big] = \;& \sum_{t = 1}^T\E_{\pi^{(k)}}\Big[\Big\|\sum_{s = t}^T\nabla f_s(\vx_*)\Big\|^2\Big] \\
    = \;& \sum_{t = 1}^T(T - t + 1)^2\E_{\pi^{(k)}}\Big[\Big\|\frac{1}{T - t + 1}\sum_{s = t}^T\nabla f_s(\vx_*)\Big\|^2\Big] \\
    \overset{(\romannumeral1)}{\leq} \;& \sum_{t = 1}^T(T - t + 1)^2\frac{t - 1}{(T - t + 1)(T - 1)}\sigma_*^2 \\
    = \;& \frac{T(T + 1)}{6}\sigma_*^2, 
\end{align*}
where $(\romannumeral1)$ is due to $\sum_{t = 1}^T \nabla f_t(\vx_*) = \mathbf{0}$ and sampling without replacement (see e.g.,~\cite[Section 2.7]{lohr2021sampling}). It remains to take expectation w.r.t.\ all randomness on both sides and use the law of total expectation. For the case of shuffle-once variant, we can directly take expectation since the randomness only comes from the initial random permutation, and the above argument still applies.
\end{proof}

\section{Omitted Proofs from Section~\ref{sec:proximal-IGD}}\label{appx:proximal-IGD}

\subsection{Convex Smooth Setting}
\begin{lemma}\label{lemma:prox-IGD-cycle-smooth}
Under Assumptions~\ref{assp:convex}~and~\ref{assp:smooth}, for any $\vz \in \R^d$ that is fixed in the $k$-th cycle of Alg.~\ref{alg:prox-IGD} and for $\alpha > 0, \beta > 0$ such that $\frac{1}{\alpha} + \frac{1}{\beta} \leq \frac{1}{2}$, if the step sizes satisfy $\eta_k \leq \frac{1}{\sqrt{\beta}TL}$, then we have for $k \in [K]$
\begin{equation}\label{eq:prox-IGD-cycle-bound-smooth}
\begin{aligned}
    T\big(f(\vx_{k}) - f(\vz)\big) \leq \;& \eta_k^2 L\sum_{t = 1}^{T - 1}\Big\|\sum_{s = t + 1}^T\nabla f_s(\vx_*)\Big\|^2 + \frac{\alpha}{\beta} T\big(f(\vz) - f(\vx_*)\big) \\
    & + \frac{1}{2\eta_k}\big(\|\vx_{k - 1} - \vz\|^2 - \|\vx_{k} - \vz\|^2\big).
\end{aligned}
\end{equation}
\end{lemma}
\begin{proof}
Since each $f_t$ is convex and $L$-smooth, we have for $t \in [T]$
\begin{align*}
    & f_t(\vx_k) - f_t(\vx_{k - 1, t + 1}) \leq \innp{\nabla f_t(\vx_{k}), \vx_{k} - \vx_{k - 1, t + 1}} - \frac{1}{2L}\|\nabla f_t(\vx_{k}) - \nabla f_t(\vx_{k - 1, t + 1})\|^2, \\
    % \label{eq:prox-IGD-upper-bound-smooth} \\ 
    & f_t(\vx_{k - 1, t + 1}) - f_t(\vz) \leq \innp{\nabla f_t(\vx_{k - 1, t + 1}), \vx_{k - 1, t + 1} - \vz} - \frac{1}{2L}\|\nabla f_t(\vx_{k - 1, t + 1}) - \nabla f_t(\vz)\|^2. %\label{eq:prox-IGD-lower-bound-smooth}
\end{align*}
Following the proof of Lemma~\ref{lemma:IGD-cycle}, we add and subtract $\frac{1}{2\eta_k}\|\vx_{k - 1, t} - \vz\|^2$ on the right-hand side\ of the second inequality and combine the above two inequalities to obtain 
\begin{align*}
    f_t(\vx_{k}) - f_t(\vz) \leq \;& \innp{\nabla f_t(\vx_{k}), \vx_{k} - \vx_{k - 1, t + 1}} - \frac{1}{2\eta_k}\|\vx_{k - 1, t + 1} - \vx_{k - 1, t}\|^2 \\
    & - \frac{1}{2L}\Big(\|\nabla f_t(\vx_{k}) - \nabla f_t(\vx_{k - 1, t + 1})\|^2 + \|\nabla f_t(\vx_{k - 1, t + 1}) - \nabla f_t(\vz)\|^2\Big) \\
    & + \frac{1}{2\eta_k}\big(\|\vx_{k - 1, t} - \vz\|^2 - \|\vx_{k - 1, t + 1} - \vz\|^2\big).
\end{align*}
Decomposing $\nabla f_t(\vx_{k}) = \nabla f_t(\vx_{k}) - \nabla f_t(\vx_{k - 1, t + 1}) + \nabla f_t(\vx_{k - 1, t + 1})$ and summing over $t \in [T]$, we note that $\vx_{k - 1} = \vx_{k - 1, 1}$ and $\vx_k = \vx_{k - 1, T + 1}$ and obtain  
\begin{align*}
    \;& T\big(f(\vx_{k}) - f(\vz)\big) \\
    \leq \;& \underbrace{\sum_{t = 1}^T\innp{\nabla f_t(\vx_{k - 1, t + 1}), \vx_{k} - \vx_{k - 1, t + 1}} - \frac{1}{2\eta_k}\sum_{t = 1}^T\|\vx_{k - 1, t + 1} - \vx_{k - 1, t}\|^2}_{\gT_1} \\
    & + \underbrace{\sum_{t = 1}^T\innp{\nabla f_t(\vx_{k}) - \nabla f_t(\vx_{k - 1, t + 1}), \vx_{k} - \vx_{k - 1, t + 1}}}_{\gT_2} + \frac{1}{2\eta_k}\big(\|\vx_{k - 1} - \vz\|^2 - \|\vx_{k} - \vz\|^2\big) \\
    & - \frac{1}{2L}\sum_{t = 1}^T\|\nabla f_t(\vx_{k}) - \nabla f_t(\vx_{k - 1, t + 1})\|^2 - \frac{1}{2L}\sum_{t = 1}^T\|\nabla f_t(\vx_{k - 1, t + 1}) - \nabla f_t(\vz)\|^2.
\end{align*}
For the term $\gT_1$, we follow the argument from the proof of Theorem~\ref{thm:convergence-IGD-last} to obtain
\begin{align*}
    \gT_1 = -\frac{1}{2\eta_k}\|\vx_{k} - \vx_{k - 1}\|^2 \leq 0. 
\end{align*}
For the term $\gT_2$, noticing that $\vx_{k} - \vx_{k - 1, t + 1} = -\eta_k\sum_{s = t + 1}^T \nabla f_s(\vx_{k - 1, s + 1})$ for $1 \leq t \leq T - 1$ and decomposing $\nabla f_s(\vx_{k - 1, s + 1}) = (\nabla f_s(\vx_{k - 1, s + 1}) - \nabla f_s(\vz)) + (\nabla f_s(\vz) - \nabla f_s(\vx_*)) + \nabla f_s(\vx_*)$, we use Young's inequality with parameters $\alpha > 0$ and $\beta > 0$ to obtain 
\begin{align*}
    \gT_2 = \;& \sum_{t = 1}^{T - 1} \Big\langle\nabla f_t(\vx_{k}) - \nabla f_t(\vx_{k - 1, t + 1}), -\eta_k\sum_{s = t + 1}^T\nabla f_s(\vx_{k - 1, s + 1})\Big\rangle \\
    \leq \;& \frac{1}{2L}\Big(\frac{1}{2} + \frac{1}{\alpha} + \frac{1}{\beta}\Big)\sum_{t = 1}^T\|\nabla f_t(\vx_k) - \nabla f_t(\vx_{k - 1, t + 1})\|^2 \\
    & + \frac{\alpha\eta_k^2 L}{2}\sum_{t = 1}^{T - 1}\Big\|\sum_{s = t + 1}^T \big(\nabla f_s(\vz) - \nabla f_s(\vx_*)\big)\Big\|^2 + \eta_k^2 L\sum_{t = 1}^{T - 1}\Big\|\sum_{s = t + 1}^T\nabla f_s(\vx_*)\Big\|^2 \\
    & + \frac{\beta\eta_k^2 L}{2}\sum_{t = 1}^{T - 1}\Big\|\sum_{s = t + 1}^T\big( \nabla f_s(\vx_{k - 1, s + 1}) - \nabla f_s(\vz)\big)\Big\|^2.
\end{align*}
Further using the fact that $\|\sum_{i = 1}^n \vx_i\|^2 \leq n\sum_{i = 1}^n\|\vx_i\|^2$  
and combining the above bounds on $\gT_1$ and $\gT_2$, we obtain 
\begin{align*}
    T\big(f(\vx_{k}) - f(\vz)\big) & \leq \frac{1}{2L}\Big(\frac{1}{\alpha} + \frac{1}{\beta} - \frac{1}{2}\Big)\sum_{t = 1}^T\|\nabla f_t(\vx_k) - \nabla f_t(\vx_{k - 1, t})\|^2 \\
    & \; + \Big(\frac{\beta \eta_k^2 T^2 L}{2} - \frac{1}{2L}\Big)\sum_{t = 1}^T\|\nabla f_t(\vx_{k - 1, t + 1}) - \nabla f_t(\vz)\|^2 \\
    & \; + \frac{\alpha\eta_k^2T^2L}{2}\sum_{t = 1}^T\|\nabla f_t(\vz) - \nabla f_t(\vx_*)\|^2 + \eta_k^2 L\sum_{t = 1}^{T - 1}\Big\|\sum_{s = t + 1}^T\nabla f_s(\vx_*)\Big\|^2 \\
    & \; + \frac{1}{2\eta_k}\big(\|\vx_{k - 1} - \vz\|^2 - \|\vx_{k} - \vz\|^2\big).
\end{align*}

The rest of the proof is the same as the proof of Lemma~\ref{lemma:IGD-cycle} and is thus omitted. 
\end{proof}

\weakReg*
\begin{proof}
For all parts of the proof, we consider $1$-dimensional 
quadratics 
\begin{align*}
    f(x) := \frac{1}{T}\sum_{t = 1}^T f_t(x), \quad \text{where} \quad f_t(x) = \frac{L}{2}(x - \delta_t)^2
\end{align*}
for $t \in [T]$, $L > 0$, and appropriately chosen  sequences of $\{\delta_t\}_{t \in [T]} \subseteq \R$.

It is immediate that $f(x)$ is minimized at $x_* = \frac{1}{T}\sum_{t = 1}^T \delta_t$. Observe that Alg.~\ref{alg:prox-IGD} using a constant step size $\eta > 0$ has closed-form updates on $f$, i.e.,\ 
\begin{align*}
    x_{k + 1} = \gamma^n x_{k} + (1 - \gamma)\sum_{t = 1}^T \gamma^{T - t}\delta_t,
\end{align*}
where $\gamma = \frac{1}{\eta L + 1} \in (0, 1)$. Given any initial point $x_0$, by iterating we have 
\begin{align}
    \textstyle x_k - x_* & = \gamma^{kT}x_0 + \sum_{t = 1}^T \Big(\frac{\gamma^{T - t}(1 - \gamma)(1 - \gamma^{kT})}{1 - \gamma^T} - \frac{1}{T}\Big)\delta_t \label{eq:lower-bound-distance} \\ 
    & \overset{k \rightarrow \infty}{\longrightarrow} \sum_{t = 1}^T \Big(\frac{\gamma^{T - t}(1 - \gamma)}{1 - \gamma^T} - \frac{1}{T}\Big)\delta_t. \label{eq:lower-bound-limit}
\end{align}

\begin{enumerate}[wide=0pt, leftmargin=\parindent]
\item Consider the weight $\delta_T$ in Eq.~\eqref{eq:lower-bound-limit}. Since $\gamma \in (0, 1)$, we have 
\begin{align*}
    \frac{1 - \gamma}{1 - \gamma^T} - \frac{1}{T} = \frac{1}{\sum_{t = 0}^{T - 1}\gamma^t} - \frac{1}{T} > 0.
\end{align*}
Then for any $\{\delta_t\}_{t \in [T]}$ such that  $\mathrm{sgn}(\delta_t) = \mathrm{sgn}\big(\frac{\gamma^{T - t}(1 - \gamma)}{1 - \gamma^T} - \frac{1}{T}\big)$ and $\delta_T > 0$, we know that 
\begin{align*}
    \lim_{k \rightarrow \infty} f(x_k) - f(x_*) \overset{(\romannumeral1)}{=} \frac{L}{2} \lim_{k \rightarrow \infty} (x_k - x_*)^2 \geq \frac{L}{2}\Big(\frac{1 - \gamma}{1 - \gamma^T} - \frac{1}{T}\Big)^2\delta_T^2 > 0, 
\end{align*}
where $(\romannumeral1)$ is due to $f$ being both $L$-strongly convex and $L$-smooth.
\item Consider the weights of $\delta_t$ in Eq.~\eqref{eq:lower-bound-limit}. Since $\gamma \in (0, 1)$, we have 
\begin{align*}
    0 \leq \frac{\gamma^{T - t}(1 - \gamma)}{1 - \gamma^T} \leq \gamma^{T - t} \leq 1, 
\end{align*}
thus for any $t \in [T - 1]$ 
\begin{align*}
    \frac{\gamma^{T - t}(1 - \gamma)}{1 - \gamma^T} - \frac{1}{T} \in \Big[-\frac{1}{T}, \frac{T - 1}{T}\Big).
\end{align*}
For $t = T$, given any fixed $\gamma < 1$, we have
\begin{align*}
    \frac{1 - \gamma}{1 - \gamma^T} - \frac{1}{T} = \frac{1}{\sum_{t = 0}^{T - 1}\gamma^t} - \frac{1}{T} > 0.
\end{align*}
Hence, for the sequence $\{\delta_t\}$ such that 
\begin{align*}
    |\delta_t| < \frac{T\sqrt{2/L}}{(T - 1)^2} \; (t \in [T - 1]), \quad \delta_T > \frac{2\sqrt{2/L}}{\frac{1 - \gamma}{1 - \gamma^T} - \frac{1}{T}}, 
\end{align*}
then combining the bounds on the weights of $\delta_t$ with Eq.~\eqref{eq:lower-bound-limit} we obtain 
\begin{align*}
    \lim_{k \rightarrow \infty} x_k - x_* = \;& \sum_{t = 1}^T \Big(\frac{\gamma^{T - t}(1 - \gamma)}{1 - \gamma^T} - \frac{1}{T}\Big)\delta_t \\
    \geq \;& \Big(\frac{1 - \gamma}{1 - \gamma^T} - \frac{1}{T}\Big)\delta_T - \sum_{t = 1}^{T - 1}\frac{T - 1}{T}|\delta_t| \\
    > \;& \sqrt{2/L}.
\end{align*}
Since $f$ is $L$-smooth and $L$-strongly convex, we know that 
\begin{align*}
    \lim_{k \rightarrow \infty} f(x_k) - f(x_*) = \frac{L}{2} \lim_{k \rightarrow \infty} (x_k - x_*)^2 > 1, 
\end{align*}
thus finishing the proof of the second part. We note in passing that 1 on the right-hand side can be replaced by any constant using a simple rescaling. 
\item 
Observe that given a fixed step size $\eta > 0$, we can choose a sequence $\{\delta_t\}_{t \in [T]}$ such that $\mathrm{sgn}(\delta_t) = \mathrm{sgn}\big(\frac{\gamma^{T - t}(1 - \gamma)}{1 - \gamma^T} - \frac{1}{T}\big)$ for all $t \in [T]$, thus for any initial point $x_0 \geq x_*$: 
\begin{align*}
    f(x_k) - f(x_*) = \;& \frac{L}{2}\Big(\gamma^{kT}(x_0 - x_*) + (1 - \gamma^{kT})\sum_{t = 1}^T \Big(\frac{\gamma^{T - t}(1 - \gamma)}{1 - \gamma^T} - \frac{1}{T}\Big)\delta_t\Big)^2 \\
    \geq \;& \frac{(1 - \gamma^{kT})^2 L }{2}\sum_{t = 1}^T\Big(\frac{\gamma^{T - t}(1 - \gamma)}{1 - \gamma^T} - \frac{1}{T}\Big)^2\delta_t^2 \\
    & + (1 - \gamma^{kT})^2 L \sum_{s \neq t \in [T]}\Big(\frac{\gamma^{T - t}(1 - \gamma)}{1 - \gamma^T} - \frac{1}{T}\Big)\Big(\frac{\gamma^{T - t}(1 - \gamma)}{1 - \gamma^T} - \frac{1}{T}\Big)\delta_s\delta_t.
\end{align*}
Without loss of generality, taking a sufficiently large $k \geq \frac{\log_\gamma(1 - 2 / \sqrt{5})}{T}$,  we obtain 
\begin{align*}
    f(x_k) - f(x_*) \geq \frac{2L}{5}\sum_{t = 1}^T\Big(\frac{\gamma^{T - t}(1 - \gamma)}{1 - \gamma^T} - \frac{1}{T}\Big)^2\delta_t^2.
\end{align*}
Then for any step size $\eta \geq 1/TL$, we have $\gamma = \frac{1}{\eta L + 1} \leq \frac{T}{T + 1}$. Consider $t = T$, we can bound 
\begin{align*}
    \frac{1 - \gamma}{1 - \gamma^T} - \frac{1}{T} \geq \frac{1 - \frac{T}{T + 1}}{1 - (\frac{T}{T + 1})^T} - \frac{1}{T} = \frac{1}{T}\big(\frac{1}{1 - (1 - \frac{1}{T})^T} - 1\big) > \frac{\e - 1}{T}.
\end{align*}
Thus, for $\delta_T \geq \frac{\sqrt{5}T\sqrt{\varepsilon}}{\sqrt{2}(\e - 1)L}$, we have 
\begin{align*}
    f(x_k) - f(x_*) \geq \frac{2L}{5}\Big(\frac{1 - \gamma}{1 - \gamma^T} - \frac{1}{T}\Big)^2\delta_T^2 > \varepsilon.
\end{align*}

On the other hand, recalling the definition in Assumption~\ref{assp:bounded-vr}, we have in this example that 
\begin{align*}
    \sigma_*^2 = \frac{L^2}{T}\sum_{t = 1}^T \Big(\frac{\sum_{t = 1}^T \delta_t}{T} - \delta_t\Big)^2 = \frac{L^2}{T}\Big(\frac{(\sum_{t = 1}^T \delta_t)^2}{T} - 2\frac{(\sum_{t = 1}^T \delta_t)^2}{T} + \sum_{t = 1}^T \delta_t^2\Big) \leq L^2\sum_{t = 1}^T\delta_t^2/T.
\end{align*}
Thus for any step size $\eta \geq \frac{16\sqrt{\varepsilon}}{\sqrt{TL}\sigma_*} \geq \frac{16\sqrt{\varepsilon}}{L^{3/2}\sqrt{\sum_{t = 1}^T\delta_t^2}}$, we have $\gamma = \frac{1}{\eta L + 1} \leq \frac{\sqrt{\sum_{t = 1}^T \delta_t^2}}{16\sqrt{\varepsilon/L} + \sqrt{\sum_{t = 1}^T \delta_t^2}}$.

We now proceed by bounding the weight of $\delta_T$. In particular, let $\gamma = 1 - \kappa$ for some $\kappa > 0$, and assume that $\kappa \leq \frac{1}{T + 1} < \frac{1}{T}$ without loss of generality by the discussion above. Since $\kappa T < 1$ and $T \geq 2$, we have 
\begin{align*}
    (1 - \kappa)^T \geq 1 - \kappa T + \frac{\kappa^2 T(T - 1)}{4}, 
\end{align*}
which leads to 
\begin{align*}
    1 - \gamma^T = 1 - (1 - \kappa)^T \leq \kappa T - \frac{\kappa^2 T(T - 1)}{4}.
\end{align*}
Hence, we have 
\begin{align*}
    \frac{1 - \gamma}{1 - \gamma^T} \geq \frac{\kappa}{\kappa T - \frac{\kappa^2 T(T - 1)}{4}} = \frac{1}{T}\frac{1}{1 - \kappa(T - 1)/4}. 
\end{align*}
Further noticing that 
\begin{align*}
    \frac{1}{1 - \kappa(T - 1)/4} \geq 1 + \frac{\kappa(T - 1)}{4} \geq 1 + \frac{\kappa T}{8}
\end{align*}
for $T \geq 2$ and $\kappa < \frac{1}{T}$, then we have 
\begin{align*}
    \frac{1 - \gamma}{1 - \gamma^T} - \frac{1}{T} \geq \frac{1}{T}(1 + \frac{\kappa T}{8}) - \frac{1}{T} = \frac{\kappa}{8}. 
\end{align*}
Recall that $\gamma \leq \frac{\sqrt{\sum_{t = 1}^T \delta_t^2}}{16\sqrt{\varepsilon/L} + \sqrt{\sum_{t = 1}^T \delta_t^2}}$, then we obtain 
\begin{align*}
    \frac{1 - \gamma}{1 - \gamma^T} - \frac{1}{T} \geq \frac{2\sqrt{\varepsilon/L}}{16\sqrt{\varepsilon/L} + \sqrt{\sum_{t = 1}^T \delta_t^2}} \geq \frac{\sqrt{3\varepsilon/L}}{\sqrt{\sum_{t = 1}^T \delta_t^2}}, 
\end{align*}
for $\{\delta_t\}_{t \in [T]}$ such that $\sqrt{\sum_{t = 1}^T \delta_t^2} \geq 16\sqrt{3}(2 + \sqrt{3})\sqrt{\varepsilon/L}$.
Thus, for the sequence $\{\delta_t\}_{t \in [T]}$ such that $\delta_T^2 > \frac{5}{6}\sum_{t = 1}^T \delta_t^2$ and $\mathrm{sgn}(\delta_t) = \mathrm{sgn}\big(\frac{\gamma^{T - t}(1 - \gamma)}{1 - \gamma^T} - \frac{1}{T}\big)$, we have 
\begin{align*}
    f(x_k) - f(x_*) \geq \;& \frac{2L}{5}\Big(\frac{(1 - \gamma)}{1 - \gamma^T} - \frac{1}{T}\Big)^2\delta_T^2 \\
    > \;& \frac{2L}{5}\frac{3\varepsilon\delta_T^2}{L\sum_{t = 1}^T \delta_t^2} \\
    > \;& \varepsilon,
\end{align*}
\end{enumerate}
completing the proof.
\end{proof}

\subsection{Convex Lipschitz Setting}
\begin{lemma}\label{lemma:prox-IGD-cycle-nonsmooth}
Under Assumptions~\ref{assp:convex}~and~\ref{assp:Lipschitz}, for any $\vz \in \R^d$ that is fixed in the $k$-th cycle of Alg.~\ref{alg:prox-IGD}, we have for $k \in [K]$
\begin{equation}\label{eq:prox-IGD-cycle-bound-nonsmooth}
\begin{aligned}
    T\big(f(\vx_{k}) - f(\vz)\big) \leq \;& \frac{1}{2\eta_k}\big(\|\vx_{k - 1} - \vz\|^2 - \|\vx_k - \vz\|^2\big) + \frac{T(T - 1)G^2\eta_k}{2}.
\end{aligned}
\end{equation}
\end{lemma}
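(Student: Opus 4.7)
The plan is to mirror the proof of \Cref{lemma:prox-IGD-cycle-smooth} but replace the smoothness-based machinery with the much simpler subgradient inequality, exploiting the equivalence $\vx_{k-1,t+1} = \vx_{k-1,t} - \eta_k \vg_{k,t}$ with $\vg_{k,t} := \nabla M_{\eta_k f_t}(\vx_{k-1,t}) \in \partial f_t(\vx_{k-1,t+1})$ and $\|\vg_{k,t}\| \le G$ (as the paper notes, the Moreau envelope gradient is a genuine subgradient of $f_t$ at $\vx_{k-1,t+1}$). The crucial structural feature is that the subgradient is evaluated at $\vx_{k-1,t+1}$, not at $\vx_{k-1,t}$; this is what makes the telescoping identity work cleanly and is the source of the method's advantage over the plain subgradient method in the nonsmooth regime.

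First I would split each per-component gap as $f_t(\vx_k)-f_t(\vz) = \bigl(f_t(\vx_k)-f_t(\vx_{k-1,t+1})\bigr) + \bigl(f_t(\vx_{k-1,t+1})-f_t(\vz)\bigr)$. For the second (backward) term, convexity of $f_t$ together with the identification $\vg_{k,t} = \frac{1}{\eta_k}(\vx_{k-1,t} - \vx_{k-1,t+1})$ yields
\begin{align*}
f_t(\vx_{k-1,t+1}) - f_t(\vz) \le \innp{\vg_{k,t}, \vx_{k-1,t+1} - \vz} = \frac{1}{\eta_k}\innp{\vx_{k-1,t} - \vx_{k-1,t+1},\, \vx_{k-1,t+1} - \vz}.
\end{align*}
The three-point identity converts the right-hand side to $\frac{1}{2\eta_k}\bigl(\|\vx_{k-1,t} - \vz\|^2 - \|\vx_{k-1,t+1} - \vz\|^2 - \|\vx_{k-1,t+1} - \vx_{k-1,t}\|^2\bigr)$, and summing over $t \in [T]$ telescopes to $\frac{1}{2\eta_k}\bigl(\|\vx_{k-1} - \vz\|^2 - \|\vx_k - \vz\|^2\bigr)$ minus a nonpositive remainder that I would simply drop.

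For the forward term, I would combine $G$-Lipschitzness with the identity $\vx_k - \vx_{k-1,t+1} = -\eta_k \sum_{s=t+1}^T \vg_{k,s}$ and the triangle inequality to obtain
\begin{align*}
f_t(\vx_k) - f_t(\vx_{k-1,t+1}) \le G \|\vx_k - \vx_{k-1,t+1}\| \le G \eta_k \sum_{s=t+1}^T \|\vg_{k,s}\| \le G^2 \eta_k (T - t).
\end{align*}
Summing over $t \in [T]$ contributes $G^2 \eta_k \sum_{t=1}^T (T - t) = \frac{T(T-1) G^2 \eta_k}{2}$, which is precisely the second term in the claimed bound. Adding the backward and forward contributions gives \eqref{eq:prox-IGD-cycle-bound-nonsmooth}.

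I do not anticipate a real obstacle here: because we never need to match the squared norms of gradient differences (as was necessary in the smooth analysis to justify dropping interaction terms via Young's inequality), no constants $\alpha, \beta$ and no step-size smallness condition need to enter this lemma. The only delicate point is to be careful that the subgradient lives at the \emph{post-proximal} point $\vx_{k-1,t+1}$, so that the three-point identity produces the clean telescoping form; after that, the argument reduces to a straightforward accounting of $T-t$ bounded subgradient steps.
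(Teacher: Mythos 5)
Your proposal is correct and follows essentially the same route as the paper's proof: the same identification of the Moreau-envelope gradient as a subgradient at the post-proximal point $\vx_{k-1,t+1}$, the same split into a Lipschitz-bounded forward gap and a convexity-plus-three-point-identity backward gap, and the same telescoping with the nonpositive remainder dropped. No material difference from the paper's argument.
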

\begin{proof}
Since $f_t$ is convex and closed, we have 
\begin{align*}
    \nabla M_{\eta_k f_t}(\vx_{k - 1, t}) = \frac{1}{\eta_k}(\vx_{k - 1, t} - \vx_{k - 1, t + 1}) \in \partial f_t(\vx_{k - 1, t + 1}).
\end{align*}
By $G$-Lipschitzness of each component function, we have that for $t \in [T - 1]$
\begin{equation}\label{eq:upper-bound-prox-IGD}
\begin{aligned}
    \;& f_t(\vx_{k}) - f_t(\vx_{k - 1, t + 1}) \\
    \leq \;& G\|\vx_k - \vx_{k - 1, t + 1}\|
    \leq \eta_k G \sum_{s = t + 1}^{T}\|\nabla M_{\eta_k f_s}(\vx_{k - 1, s})\| \leq (T - t) G^2 \eta_k.
\end{aligned}
\end{equation}
On the other hand, using convexity of $f_t$, we have that for $t \in [T]$
\begin{align*}
    f_t(\vz) \geq f_t(\vx_{k - 1, t + 1}) + \innp{\nabla M_{\eta_k f_t}(\vx_{k - 1, t}), \vz - \vx_{k - 1, t + 1}}.
\end{align*}
Expanding the inner product in the above inequality leads to 
\begin{align}
    \;& f_t(\vx_{k - 1, t + 1}) - f_t(\vz) \notag \\
    \leq \;& -\frac{1}{\eta_k}\innp{\vx_{k - 1, t} - \vx_{k - 1, t + 1}, \vz - \vx_{k - 1, t + 1}} \notag \\
    = \;& \frac{1}{2\eta_k}\big(\|\vx_{k - 1, t} - \vz\|^2 - \|\vx_{k - 1, t + 1} - \vz\|^2\big) - \frac{1}{2\eta_k}\|\vx_{k - 1, t + 1} - \vx_{k - 1, t}\|^2 \label{eq:lower-bound-prox-IGD}.
\end{align}
Combining Eq.~\eqref{eq:upper-bound-prox-IGD}~and~\eqref{eq:lower-bound-prox-IGD} and noticing that $\vx_{k - 1, T + 1} = \vx_k$ and $\vx_{k - 1, 1} = \vx_{k - 1}$, we sum the inequalities over $t \in [T]$ and obtain 
\begin{align*}
    T(f(\vx_k) - f(\vz)) \leq \;& \frac{1}{2\eta_k}\big(\|\vx_{k - 1} - \vz\|^2 - \|\vx_k - \vz\|^2\big) + \frac{T(T - 1)G^2\eta_k}{2} \\
    & - \frac{1}{2\eta_k}\sum_{t = 1}^T\|\vx_{k - 1, t + 1} - \vx_{k - 1, t}\|^2 \\
    \leq \;& \frac{1}{2\eta_k}\big(\|\vx_{k - 1} - \vz\|^2 - \|\vx_k - \vz\|^2\big) + \frac{T(T - 1)G^2\eta_k}{2}. 
\end{align*}
\end{proof}

\convergenceProxIGD*
\begin{proof}
Plugging $\vz_{k - 1}$ defined in Eq.~\eqref{eq:zk-1} into Eq.~\eqref{eq:prox-IGD-cycle-bound-nonsmooth} and multiplying $\eta_k w_{k - 1}$ on both sides, we obtain 
\begin{align*}
    \;& T\eta_k w_{k - 1}\big(f(\vx_{k}) - f(\vz_{k - 1})\big) \\
    \leq \;& \frac{1}{2}\big(w_{k - 2}\|\vx_{k - 1} - \vz_{k - 2}\|^2 - w_{k - 1}\|\vx_k - \vz_{k - 1}\|^2\big) + \frac{T(T - 1)G^2\eta_k^2 w_{k - 1}}{2}, 
\end{align*}
Summing over $k \in [K]$ and using the second part of Lemma~\ref{lemma:zk}, we have 
\begin{align*}
    \;& \sum_{k = 1}^K T\eta_k\Big[w_{k - 1}(f(\vx_k) - f(\vx_*)) - \sum_{j = 0}^{k - 1}w_j(1 - \lambda_j)(f(\vx_j) - f(\vx_*))\Big] \\
    \leq \;& \frac{T(T - 1)G^2}{2}\sum_{k = 1}^K\eta_k^2w_{k - 1} + \frac{w_{-1}}{2}\|\vx_0 - \vx_*\|^2, 
\end{align*}
where we also recall that $\vz_{-1} = \vx_*$. Unrolling the terms on the left-hand side\ as Eq.~\eqref{eq:sum-unroll} and choosing $\lambda_0 = 1$, we obtain  
\begin{equation}\label{eq:prox-IGD-bound-sum-nonsmooth}
\begin{aligned}
    \;& T\eta_K w_{K - 1}\big(f(\vx_K) - f(\vx_*)\big) \\
    & + T\sum_{k = 1}^{K - 1}\Big[\eta_k w_{k - 1} - w_k(1 - \lambda_k)\sum_{j = k + 1}^K \eta_j\Big]\big(f(\vx_k) - f(\vx_*)\big) \\
    \leq \;& \frac{T(T - 1)G^2}{2}\sum_{k = 1}^K\eta_k^2w_{k - 1} + \frac{w_{-1}}{2}\|\vx_0 - \vx_*\|^2.
\end{aligned}
\end{equation}
To obtain the last iterate guarantee, we choose $\lambda_k$ and $w_k$ such that 
\begin{align*}
    \lambda_k w_{k} \leq \;& w_{k - 1}, \quad 0 \leq k \leq K - 1, \\
    \eta_k w_{k - 1} - w_k(1 - \lambda_k)\sum_{j = k + 1}^K \eta_j \geq \;& 0, \quad 1 \leq k \leq K - 1.
\end{align*}
For simplicity and without loss of generality, we make both inequalities tight and choose $w_k = \frac{\sum_{j = k}^K \eta_j}{\sum_{j = k + 1}^K \eta_j}w_{k - 1}$. In particular, we choose $w_k = \frac{\eta_{K}}{\sum_{j = k + 1}^K \eta_j}$ for $0 \leq k \leq K - 1$ such that $w_{K - 1} = 1$, then we divide $T\eta_K$ on both sides of Eq.~\eqref{eq:prox-IGD-bound-sum-nonsmooth} and obtain  
\begin{align*}
    f(\vx_K) - f(\vx_*) \leq \;& \frac{w_{-1}}{2T\eta_K}\|\vx_0 - \vx_*\|^2 + \frac{G^2 T}{2\eta_K}\sum_{k = 1}^K \eta_k^2 w_{k - 1} \\
    = \;& \frac{1}{2T\sum_{k = 1}^K\eta_k}\|\vx_0 - \vx_*\|^2 + \frac{G^2 T}{2}\sum_{k = 1}^K\frac{\eta_k^2}{\sum_{j = k}^K\eta_j}.
\end{align*}
Finally, choosing $\eta_k \equiv \eta = \frac{\|\vx_0 - \vx_*\|}{GT\sqrt{K}}$, we get 
\begin{align*}
    f(\vx_K) - f(\vx_*) \leq \frac{G\|\vx_0 - \vx_*\|}{2\sqrt{K}}\Big(1 + \sum_{k = 1}^K\frac{1}{K - k + 1}\Big) \leq \frac{G\|\vx_0 - \vx_*\|(1 + \log K / 2)}{\sqrt{K}}.
\end{align*}
Hence, given $\epsilon > 0$, to guarantee $f(\vx_K) - f(\vx_*) \leq \epsilon$, the total number of individual gradient evaluations will be 
\begin{equation*}
    TK = \widetilde \gO\Big(\frac{G^2 T\|\vx_0 - \vx_*\|^2}{\epsilon^2}\Big), 
\end{equation*}
completing the proof.
\end{proof}

\subsection{Inexact Proximal Point Evaluations}
We first prove the convergence results for convex smooth settings. The following techical lemma bounds $f(\vx_k) - f(\vz)$ within each epoch with inexact proximal point evaluations. 
\begin{lemma}\label{lemma:prox-IGD-cycle-smooth-inexact}
Under Assumptions~\ref{assp:convex}~and~\ref{assp:smooth}, for any $\vz \in \R^d$ that is fixed in the $k$-th cycle of Alg.~\ref{alg:prox-IGD} and for $\alpha > 0, \beta > 0$ such that $\frac{1}{\alpha} + \frac{1}{\beta} \leq \frac{1}{2}$, if the step sizes satisfy $\eta_k \leq \frac{1}{\sqrt{\beta}TL}$, then we have for $k \in [K]$
\begin{align*}
   T\big(f(\vx_{k}) - f(\vz)\big) \leq \;& 2\eta_k^2 L\sum_{t = 1}^{T - 1}\Big\|\sum_{s = t + 1}^T\nabla f_s(\vx_*)\Big\|^2 + \frac{\alpha}{\beta} T\big(f(\vz) - f(\vx_*)\big) + \frac{T}{\eta_k}\sum_{t = 1}^T \varepsilon_{k - 1, t}^2 \\
    & + \frac{1}{2\eta_k}\|\vx_{k - 1} - \vz\|^2 - \frac{1}{2\eta_k}\big(1 - \frac{\sum_{t = 1}^T \varepsilon_{k - 1, t}}{\sqrt{\eta_k}}\big)\|\vx_{k} - \vz\|^2 + \frac{\sum_{t = 1}^T\varepsilon_{k - 1, t}}{2\sqrt{\eta_k}}.
\end{align*}
\end{lemma}
\begin{proof}
Since each $f_t$ is convex and $L$-smooth, we have for $t \in [T]$
\begin{align*}
    & f_t(\vx_k) - f_t(\vx_{k - 1, t + 1}) \leq \innp{\nabla f_t(\vx_{k}), \vx_{k} - \vx_{k - 1, t + 1}} - \frac{1}{2L}\|\nabla f_t(\vx_{k}) - \nabla f_t(\vx_{k - 1, t + 1})\|^2, \\
    % \label{eq:prox-IGD-upper-bound-smooth} \\ 
    & f_t(\vx_{k - 1, t + 1}) - f_t(\vz) \leq \innp{\nabla f_t(\vx_{k - 1, t + 1}), \vx_{k - 1, t + 1} - \vz} - \frac{1}{2L}\|\nabla f_t(\vx_{k - 1, t + 1}) - \nabla f_t(\vz)\|^2. %\label{eq:prox-IGD-lower-bound-smooth}
\end{align*}
Following the proof of Lemma~\ref{lemma:IGD-cycle}, we add and subtract $\frac{1}{2\eta_k}\|\vx_{k - 1, t} - \vz\|^2$ on the right-hand side of the second inequality and notice that 
$
    \innp{\vg_{k - 1, t}, \vz} + \frac{1}{2\eta_k}\|\vx_{k - 1, t} - \vz\|^2 \geq \innp{\vg_{k - 1, t}, \vx_{k - 1, t + 1}} + \frac{1}{2\eta_k}\|\vx_{k - 1, t} - \vx_{k - 1, t + 1}\|^2 + \frac{1}{2\eta_k}\|\vx_{k - 1, t + 1} - \vz\|^2,
$
then we combine the above inequalities to obtain 
\begin{align*}
    f_t(\vx_{k}) - f_t(\vz) \leq \;& \innp{\nabla f_t(\vx_{k}), \vx_{k} - \vx_{k - 1, t + 1}} + \innp{\nabla f_t(\vx_{k - 1, t+ 1}) - \vg_{k - 1, t}, \vx_{k - 1, t + 1} - \vz}\\
    & - \frac{1}{2L}\Big(\|\nabla f_t(\vx_{k}) - \nabla f_t(\vx_{k - 1, t + 1})\|^2 + \|\nabla f_t(\vx_{k - 1, t + 1}) - \nabla f_t(\vz)\|^2\Big) \\
    & - \frac{1}{2\eta_k}\|\vx_{k - 1, t + 1} - \vx_{k - 1, t}\|^2 + \frac{1}{2\eta_k}\big(\|\vx_{k - 1, t} - \vz\|^2 - \|\vx_{k - 1, t + 1} - \vz\|^2\big).
\end{align*}
We decompose $\nabla f_t(\vx_{k}) = \nabla f_t(\vx_{k}) - \nabla f_t(\vx_{k - 1, t + 1}) + \nabla f_t(\vx_{k - 1, t + 1}) - \vg_{k - 1, t} + \vg_{k - 1, t}$ in the first inner product term on the right-hand side, and sum the inequalities over $t \in [T]$ with noticing $\vx_{k - 1} = \vx_{k - 1, 1}$ and $\vx_k = \vx_{k - 1, T + 1}$, and obtain 
\begin{align*}
    T\big(f(\vx_{k}) - f(\vz)\big) \leq \;& \underbrace{\sum_{t = 1}^T\innp{\vg_{k - 1, t}, \vx_{k} - \vx_{k - 1, t + 1}} - \frac{1}{2\eta_k}\sum_{t = 1}^T\|\vx_{k - 1, t + 1} - \vx_{k - 1, t}\|^2}_{\gT_1} \\
    & + \underbrace{\sum_{t = 1}^T\innp{\nabla f_t(\vx_{k}) - \nabla f_t(\vx_{k - 1, t + 1}), \vx_{k} - \vx_{k - 1, t + 1}}}_{\gT_2} \\
    & + \underbrace{\sum_{t = 1}^T \innp{\nabla f_t(\vx_{k - 1, t+ 1}) - \vg_{k - 1, t}, \vx_{k} - \vz}}_{\gT_3} + \frac{1}{2\eta_k}\big(\|\vx_{k - 1} - \vz\|^2 - \|\vx_{k} - \vz\|^2\big) \\
    & - \frac{1}{2L}\sum_{t = 1}^T\Big(\|\nabla f_t(\vx_{k}) - \nabla f_t(\vx_{k - 1, t + 1})\|^2 + \|\nabla f_t(\vx_{k - 1, t + 1}) - \nabla f_t(\vz)\|^2\Big).
\end{align*}
For the term $\gT_1$, we follow the argument from the proof of Theorem~\ref{thm:convergence-IGD-last} to obtain
\begin{align*}
    \gT_1 = -\frac{1}{2\eta_k}\|\vx_{k} - \vx_{k - 1}\|^2 \leq 0. 
\end{align*}
For the term $\gT_2$, noticing that $\vx_{k} - \vx_{k - 1, t + 1} = -\eta_k\sum_{s = t + 1}^T \vg_{k - 1, s}$ for $1 \leq t \leq T - 1$ and $\vg_{k - 1, s} = \vg_{k - 1, s} - \nabla f_s(\vx_{k - 1, s + 1}) + \nabla f_s(\vx_{k - 1, s + 1}) - \nabla f_s(\vz) + \nabla f_s(\vz) - \nabla f_s(\vx_*) + \nabla f_s(\vx_*)$, we use Young's inequality with parameters $\alpha > 0$ and $\beta > 0$ to obtain 
\begin{align*}
    \gT_2 = \;& \sum_{t = 1}^{T - 1} \Big\langle\nabla f_t(\vx_{k}) - \nabla f_t(\vx_{k - 1, t + 1}), -\eta_k\sum_{s = t + 1}^T \vg_{k - 1, s}\Big\rangle \\
    \leq \;& \frac{1}{2L}\Big(\frac{1}{2} + \frac{1}{\alpha} + \frac{1}{\beta}\Big)\sum_{t = 1}^T\|\nabla f_t(\vx_k) - \nabla f_t(\vx_{k - 1, t + 1})\|^2 \\
    & + \frac{\alpha\eta_k^2 L}{2}\sum_{t = 1}^{T - 1}\Big\|\sum_{s = t + 1}^T \big(\nabla f_s(\vz) - \nabla f_s(\vx_*)\big)\Big\|^2 + 2\eta_k^2 L\sum_{t = 1}^{T - 1}\Big\|\sum_{s = t + 1}^T\nabla f_s(\vx_*)\Big\|^2 \\
    & + \frac{\beta\eta_k^2 L}{2}\sum_{t = 1}^{T - 1}\Big\|\sum_{s = t + 1}^T\big( \nabla f_s(\vx_{k - 1, s + 1}) - \nabla f_s(\vz)\big)\Big\|^2 \\
    & + 2\eta_k^2 L\sum_{t = 1}^{T - 1}\Big\|\sum_{s = t + 1}^T\big(\vg_{k - 1, s} - \nabla f_s(\vx_{k - 1, s + 1})\big)\Big\|^2
\end{align*}
For the term $\gT_3$, we use Cauchy-Schwarz inequality and Young's inequality to get 
\begin{align*}
    \;& \sum_{t = 1}^T \innp{\nabla f_t(\vx_{k - 1, t+ 1}) - \vg_{k - 1, t}, \vx_{k} - \vz} \\
    \leq \;& \frac{1}{2\sqrt{\eta_k}}\sum_{t = 1}^T \|\nabla f_t(\vx_{k - 1, t+ 1}) - \vg_{k - 1, t}\| \|\vx_k - \vz\|^2 + \frac{\sqrt{\eta_k}}{2}\sum_{t = 1}^T  \|\nabla f_t(\vx_{k - 1, t+ 1}) - \vg_{k - 1, t}\|.
\end{align*}
Further using the fact that $\|\sum_{i = 1}^n \vx_i\|^2 \leq n\sum_{i = 1}^n\|\vx_i\|^2$  
and combining the above bounds on $\gT_1$, $\gT_2$ and $\gT_3$ with $\|\vg_{k - 1, t} - \nabla f_t(\vx_{k - 1, t + 1})\| \leq \frac{\varepsilon_{k - 1, t}}{\eta_k}$ for $t \in [T]$, we obtain 
\begin{align*}
    T\big(f(\vx_{k}) - f(\vz)\big) \leq \;& \frac{1}{2L}\Big(\frac{1}{\alpha} + \frac{1}{\beta} - \frac{1}{2}\Big)\sum_{t = 1}^T\|\nabla f_t(\vx_k) - \nabla f_t(\vx_{k - 1, t})\|^2 \\
    & + \Big(\frac{\beta \eta_k^2 T^2 L}{2} - \frac{1}{2L}\Big)\sum_{t = 1}^T\|\nabla f_t(\vx_{k - 1, t + 1}) - \nabla f_t(\vz)\|^2 \\
    & + \frac{\alpha\eta_k^2T^2L}{2}\sum_{t = 1}^T\|\nabla f_t(\vz) - \nabla f_t(\vx_*)\|^2 + 2\eta_k^2 L\sum_{t = 1}^{T - 1}\Big\|\sum_{s = t + 1}^T\nabla f_s(\vx_*)\Big\|^2 \\
    & + 2 T^2 L \sum_{t = 1}^{T}\varepsilon_{k - 1, t}^2 +  \frac{1}{2\eta_k^{3/2}}\sum_{t = 1}^T \varepsilon_{k - 1, t}\|\vx_{k} - \vz\|^2 + \frac{1}{2\sqrt{\eta_k}}\sum_{t = 1}^T \varepsilon_{k - 1, t} \\
    & + \frac{1}{2\eta_k}\big(\|\vx_{k - 1} - \vz\|^2 - \|\vx_{k} - \vz\|^2\big).
\end{align*}
It remains to follow the proof of Lemma~\ref{lemma:IGD-cycle} and use $\eta_k \leq \frac{1}{\sqrt{\beta}TL} \leq \frac{1}{2TL}$ for $\beta \geq 4$ to obtain
\begin{align*}
    T\big(f(\vx_{k}) - f(\vz)\big) \leq \;& 2\eta_k^2 L\sum_{t = 1}^{T - 1}\Big\|\sum_{s = t + 1}^T\nabla f_s(\vx_*)\Big\|^2 + \frac{\alpha}{\beta} T\big(f(\vz) - f(\vx_*)\big) + \frac{T}{\eta_k}\sum_{t = 1}^T \varepsilon_{k - 1, t}^2 \\
    & + \frac{1}{2\eta_k}\|\vx_{k - 1} - \vz\|^2 - \frac{1}{2\eta_k}\big(1 - \frac{\sum_{t = 1}^T \varepsilon_{k - 1, t}}{\sqrt{\eta_k}}\big)\|\vx_{k} - \vz\|^2 + \frac{\sum_{t = 1}^T\varepsilon_{k - 1, t}}{2\sqrt{\eta_k}}, 
\end{align*}
thus finishing the proof.
\end{proof}

\convergenceInexactSmooth*
\begin{proof}
Using Lemma~\ref{lemma:prox-IGD-cycle-smooth-inexact} and following the proof of Theorem~\ref{thm:convergence-IGD-last} with multiplying $\eta_k w_{k - 1}$ on both sides, we have 
\begin{align*}
    \;& T\eta_k w_{k - 1}\big(f(\vx_{k}) - f(\vz_{k - 1})\big) \\
    \leq \;& 2T^3 \eta_k^3 w_{k - 1} L \sigma_*^2 + \frac{\alpha}{\beta} T \eta_k w_{k - 1}\big(f(\vz_{k - 1}) - f(\vx_*)\big) + T w_{k - 1}\sum_{t = 1}^T \varepsilon_{k - 1, t}^2 + \frac{w_{k - 1}\sqrt{\eta_k}}{2}\sum_{t = 1}^T \varepsilon_{k - 1, t} \\
    & + \frac{\lambda_{k - 1}^2 w_{k - 1}}{2}\|\vx_{k - 1} - \vz_{k - 2}\|^2 - \frac{w_{k - 1}(1 - \sum_{t = 1}^T \varepsilon_{k - 1, t}/\sqrt{\eta_k})}{2}\|\vx_{k} - \vz_{k - 1}\|^2. 
\end{align*}
Then we sum the above inequality over $k \in [K]$ and follow the proof of Theorem~\ref{thm:convergence-IGD-last}. To telescope the terms $\|\vx_{k} - \vz_{k - 1}\|^2$, we need $\sum_{t = 1}^T \varepsilon_{k - 1, t}/\sqrt{\eta_k} \leq 1 - \lambda_{k}$ for $1 \leq k \leq K - 1$ such that 
\begin{align*}
    \lambda_k^2 w_k \leq \lambda_k w_k \Big(1 - \sum_{t = 1}^T \varepsilon_{k - 1, t} / \sqrt{\eta_k}\Big) \leq w_{k - 1}\Big(1 - \sum_{t = 1}^T \varepsilon_{k - 1, t} / \sqrt{\eta_k}\Big).
\end{align*}
In this case, we maintain the same requirements on $\{\lambda_k\}$ and $\{w_k\}$ to obtain the guarantee on the last iterate as in Theorem~\ref{thm:convergence-IGD-last}. In particular, we take the same choices with constant step sizes $\eta_k \equiv \eta$ such that $\lambda_{k} = \frac{w_{k - 1}}{w_k} = \frac{(1 + \frac{\alpha}{\beta})(K - k)}{1 + (1 + \frac{\alpha}{\beta})(K - k)}$ for $0 \leq k \leq K - 1$, so it suffices to let $\sum_{t = 1}^T \varepsilon_{k - 1, t} \leq \frac{\sqrt{\eta}}{1 + (1 + \frac{\alpha}{\beta})(K - k + 1)}$ for $1 \leq k \leq K$. Following the proof of Theorem~\ref{thm:convergence-IGD-last}, we obtain 
\begin{align*}
    f(\vx_K) - f(\vx_*) \leq \;& \frac{w_{-1}}{2\eta T}\|\vx_0 - \vx_*\|^2 + 2\eta^2 T^2 \sigma_*^2 L\sum_{k = 1}^K w_{k - 1} \\
    & + \frac{1}{\eta}\sum_{k = 1}^K\sum_{t = 1}^T w_{k - 1}\varepsilon_{k - 1, t}^2 + \frac{1}{2\sqrt{\eta} T}\sum_{k = 1}^K\sum_{t = 1}^Tw_{k - 1}\varepsilon_{k - 1, t}.
\end{align*}
Plugging in the choice that $w_{k - 1} \leq \frac{\e}{(K - k + 1)^{\frac{1}{1 + \alpha/\beta}}}$ for $1 \leq k \leq K - 1$ and $w_{K - 1} = 1$, we then have 
\begin{align*}
    f(\vx_K) - f(\vx_*) \leq \;& \frac{\e\|\vx_0 - \vx_*\|^2}{2\eta T K^{\frac{1}{1 + \alpha/\beta}}} + 2\eta^2 T^2 \sigma_*^2 L (1 + \beta / \alpha)K^{\frac{\alpha / \beta}{1 + \alpha / \beta}} + \frac{\e}{2\eta T}\sum_{k = 0}^{K - 1} \sum_{t = 1}^T \frac{2T \varepsilon_{k, t}^2 + \sqrt{\eta}\varepsilon_{k, t}}{(K - k)^{\frac{1}{1 + \alpha / \beta}}}.
\end{align*}
Hence, given $\varepsilon > 0$, to maintain the convergence rate with exact proximal point evaluations, it suffices to take $\sum_{t = 1}^T \varepsilon_{k - 1, t} \leq \sqrt{\eta} \min\{\frac{\varepsilon}{4\e^2(1 + \log K)}, \frac{1}{1 + (1 + \frac{\alpha}{\beta})(K - k + 1)}\}$ for $1 \leq k \leq K$. Indeed, we have 
\begin{align*}
    f(\vx_K) - f(\vx_*) \leq \;& \frac{\e\|\vx_0 - \vx_*\|^2}{2\eta T K^{\frac{1}{1 + \alpha/\beta}}} + 2\eta^2 T^2 \sigma_*^2 L (1 + \beta / \alpha)K^{\frac{\alpha / \beta}{1 + \alpha / \beta}} + \sum_{k = 0}^K\frac{2\e\varepsilon}{(K - k)^{\frac{1}{1 + \alpha / \beta}}} \\
    \overset{(\romannumeral1)}{\leq} \;& \frac{\e\|\vx_0 - \vx_*\|^2}{2\eta T K^{\frac{1}{1 + \alpha/\beta}}} + 2\eta^2 T^2 \sigma_*^2 L (1 + \beta / \alpha)K^{\frac{\alpha / \beta}{1 + \alpha / \beta}} + 2\e\varepsilon(1 + \beta/\alpha)K^{\frac{\alpha / \beta}{1 + \alpha / \beta}}.
\end{align*}
It remains to follow the proof of Theorem~\ref{thm:convergence-IGD-last}, and we choose $\sum_{t = 1}^T \varepsilon_{k - 1, t} = \sqrt{\eta} \min\{\varepsilon, \frac{1}{3(K - k + 1)}\}$, assuming without loss of generality that $\varepsilon \leq \frac{1}{4\e^2(1 + \log K)}$.
\end{proof}

We then come to prove the convergence with inexact proximal point evaluations for convex Lipschitz settings.
\begin{lemma}\label{lemma:prox-IGD-cycle-nonsmooth-inexact}
Under Assumptions~\ref{assp:convex}~and~\ref{assp:Lipschitz}, for any $\vz \in \R^d$ that is fixed in the $k$-th cycle of Alg.~\ref{alg:prox-IGD}, we have for $k \in [K]$
\begin{equation}\label{eq:prox-IGD-cycle-bound-nonsmooth-inexact}
\begin{aligned}
    T(f(\vx_k) - f(\vz)) \leq \;& \frac{1}{2\eta_k}\Big(1 + \frac{1}{2 \eta_k G T}\sum_{t = 1}^T \varepsilon_{k - 1, t}\Big)\|\vx_{k - 1} - \vz\|^2 - \frac{1}{2\eta_k}\|\vx_k - \vz\|^2 \\
    & + \frac{T(T - 1)G^2\eta_k}{2} 
    + \frac{1}{\eta_k}\Big(\sum_{t = 1}^T\varepsilon_{k - 1, t}\Big)^2 + 3GT\sum_{t = 1}^T \varepsilon_{k - 1, t}.
\end{aligned}
\end{equation}
\end{lemma}
\begin{proof}
By Lipschitzness of each component function, we have for $t \in [T - 1]$
\begin{align*}
    f_t(\vx_{k}) - f_t(\vx_{k - 1, t + 1}) \leq \;& G\|\vx_k - \vx_{k - 1, t + 1}\| = G\eta_k\Big\|\sum_{s = t + 1}^T\vg_{k - 1, s}\Big\|. 
\end{align*}
Decomposing $\vg_{k - 1, s} = \vg_{k - 1, s} - \nabla M_{\eta_k f_s}(\vx_{k - 1, s}) + \nabla M_{\eta_k f_s}(\vx_{k - 1, s})$ and using triangle inequalities, we have 
\begin{align}
    \;& f_t(\vx_{k}) - f_t(\vx_{k - 1, t + 1}) \notag \\
    \leq \;& \eta_k G \sum_{s = t + 1}^{T}\Big(\|\vg_{k - 1, s} - \nabla M_{\eta_k f_s}(\vx_{k - 1, s})\| + \|\nabla M_{\eta_k f_s}(\vx_{k - 1, s})\|\Big) \notag \\
    \overset{(\romannumeral1)}{\leq} \;& \eta_k G \sum_{s = t + 1}^{T}\Big(\frac{\varepsilon_{k - 1, s}}{\eta_k} + G\Big) \leq (T - t) G^2 \eta_k + G\sum_{s = t + 1}^T\varepsilon_{k - 1, s}, \label{eq:upper-bound-prox-IGD-inexact}
\end{align}
where we use Eq.~\eqref{eq:pp-inexact} and the fact that $\nabla M_{\eta_k f_s}(\vx_{k - 1, s}) \in \partial f_t(\mathrm{prox}_{\eta_k f_t}(\vx_{k - 1, t}))$ for $(\romannumeral1)$. 
On the other hand, using convexity of $f_t$, we have for $t \in [T]$ that 
\begin{align*}
f_t(\vz) \geq \;& f_t(\vx_{k - 1, t + 1}) + \innp{\nabla M_{\eta_k f_t}(\vx_{k - 1, t}), \vz - \vx_{k - 1, t + 1}} \\
= \;& f_t(\vx_{k - 1, t + 1}) + \innp{\vg_{k - 1, t}, \vz - \vx_{k - 1, t + 1}} + \innp{\nabla M_{\eta_k f_t}(\vx_{k - 1, t}) - \vg_{k - 1, t}, \vz - \vx_{k - 1, t + 1}}.
\end{align*}
Expanding the inner product in the above quantity and using Cauchy-Schwarz inequality with Eq.~\eqref{eq:pp-inexact} leads to 
\begin{align}
    \;& f_t(\vx_{k - 1, t + 1}) - f_t(\vz) \notag \\
    \leq \;& -\frac{1}{\eta_k}\innp{\vx_{k - 1, t} - \vx_{k - 1, t + 1}, \vz - \vx_{k - 1, t + 1}} + \|\nabla M_{\eta_k f_t}(\vx_{k - 1, t}) + \vg_{k - 1, t}\| \|\vz - \vx_{k - 1, t + 1}\| \notag \\
    \leq \;& \frac{1}{2\eta_k}\big(\|\vx_{k - 1, t} - \vz\|^2 - \|\vx_{k - 1, t + 1} - \vz\|^2\big) + \frac{\varepsilon_{k - 1, t}}{\eta_k}\|\vx_{k - 1, t + 1} - \vz\|. \label{eq:lower-bound-prox-IGD-inexact}
\end{align}
Using triangle inequalities and decomposing $\vx_{k - 1, t + 1} - \vx_{k - 1} = \eta_k\sum_{s = 1}^t \vg_{k - 1, s} - \nabla M_{\eta_k f_s}(\vx_{k - 1, s}) + \nabla M_{\eta_k f_s}(\vx_{k - 1, s})$, we bound the term $\gT := \sum_{t = 1}^T \varepsilon_{k - 1, t}\|\vx_{k - 1, t + 1} - \vz\|$ in Eq.~\eqref{eq:lower-bound-prox-IGD-inexact} as follows 
\begin{align*}
    \gT \leq \;& \sum_{t = 1}^T \varepsilon_{k - 1, t}\big(\|\vx_{k - 1, t + 1} - \vx_{k - 1}\| + \|\vx_{k - 1} - \vz\|\big) \\
    \leq \;& \sum_{t = 1}^T \varepsilon_{k - 1, t}\Big(\eta_k\sum_{s = 1}^t\big(\|\vg_{k - 1, s} - \nabla M_{\eta_k f_s}(\vx_{k - 1, s})\| + \|\nabla M_{\eta_k f_s}(\vx_{k - 1, s})\|\big) + \|\vx_{k - 1} - \vz\|\Big) \\
    \leq \;& \sum_{t = 1}^T\varepsilon_{k - 1, t}\sum_{s = 1}^t\varepsilon_{k - 1, s} + \big(\eta_k G T + \|\vx_{k - 1} - \vz\|\big)\sum_{t = 1}^T \varepsilon_{k - 1, t} \\
    \overset{(\romannumeral1)}{\leq} \;& \Big(\sum_{t = 1}^T\varepsilon_{k - 1, t}\Big)^2 + 2\eta_k G T \sum_{t = 1}^T \varepsilon_{k - 1, t} + \frac{1}{4\eta_k G T}\sum_{t = 1}^T\varepsilon_{k - 1, t}\|\vx_{k - 1} - \vz\|^2, 
\end{align*}
where we use Young's inequality for $(\romannumeral1)$.
Combining Eq.~\eqref{eq:upper-bound-prox-IGD-inexact}~and~\eqref{eq:lower-bound-prox-IGD-inexact} with the above bound on $\gT$ and noticing that $\vx_{k - 1, T + 1} = \vx_k$ and $\vx_{k - 1, 1} = \vx_{k - 1}$, we sum the inequalities over $t \in [T]$ and obtain 
\begin{align*}
    T(f(\vx_k) - f(\vz)) \leq \;& \frac{1}{2\eta_k}\Big(1 + \frac{1}{2 \eta_k G T}\sum_{t = 1}^T \varepsilon_{k - 1, t}\Big)\|\vx_{k - 1} - \vz\|^2 - \frac{1}{2\eta_k}\|\vx_k - \vz\|^2 \\
    & + \frac{T(T - 1)G^2\eta_k}{2} 
    + \frac{1}{\eta_k}\Big(\sum_{t = 1}^T\varepsilon_{k - 1, t}\Big)^2 + 3GT\sum_{t = 1}^T \varepsilon_{k - 1, t}, 
\end{align*}
thus finishing the proof.
\end{proof}

\convergenceInexactNonsmooth*
\begin{proof}
Using Lemma~\ref{lemma:prox-IGD-cycle-nonsmooth-inexact} with $\vz = \vz_{k - 1}$ defined by Eq.~\eqref{eq:zk-1} and multiplying $\eta_k w_{k - 1}$ on both sides, we have 
\begin{align*}
    \;& T\eta_k w_{k - 1}(f(\vx_k) - f(\vz_{k - 1})) \\
    \leq \;& \frac{w_{k - 1}\lambda_{k - 1}^2(1 + \frac{1}{2\eta_k G T}\sum_{t = 1}^T \varepsilon_{k - 1, t})}{2}\|\vx_{k - 1} - \vz_{k - 2}\|^2 - \frac{w_{k - 1}}{2}\|\vx_k - \vz_{k - 1}\|^2  \\
    & + \frac{T(T - 1)G^2\eta_k^2 w_{k - 1}}{2} + \frac{w_{k - 1}}{2}\Big(\sum_{t = 1}^T \varepsilon_{k - 1, t}\Big)^2 + 3GT\eta_k w_{k - 1}\sum_{t = 1}^T \varepsilon_{k - 1, t}. 
\end{align*}
Then we sum the inequalities over $k \in [K]$ and follow the proof of Theorem~\ref{thm:convergence--prox-IGD-last-nonsmooth}. To telescope the terms $\|\vx_k - \vz_{k - 1}\|^2$, we need $\lambda_{k - 1} \leq \frac{1}{1 + \frac{1}{2 \eta_{k} G T}\sum_{t = 1}^T \varepsilon_{k - 1, t}}$ for $1 \leq k \leq K - 1$ such that 
\begin{align*}
    w_{k - 1}\lambda_{k - 1}^2(1 + \frac{1}{2\eta_{k} G T}\sum_{t = 1}^T\varepsilon_{k - 1, t}) \leq w_{k - 1}\lambda_{k - 1} \leq w_{k - 2}, 
\end{align*}
while we maintain other requirements on $\{\lambda_k\}$ and $\{w_k\}$ to obtain the last iterate convergence as in Theorem~\ref{thm:convergence--prox-IGD-last-nonsmooth}. In particular, we take the same choice that $w_k = \frac{\eta_K}{\sum_{j = k + 1}^K \eta_j}$ and $\lambda_k = \frac{\sum_{j = k + 1}^K \eta_j}{\sum_{j = k}^K \eta_j}$ for $0 \leq k \leq K - 1$, so it suffices to let $\sum_{t = 1}^T \varepsilon_{k - 1, t} \leq \frac{2 \eta_k \eta_{k - 1} G T}{\sum_{j = k}^K \eta_j}$. So we    arrive at 
\begin{align*}
    f(\vx_K) - f(\vx_*) \leq \;& \frac{w_{-1}}{2T\eta_K}\|\vx_0 - \vx_*\|^2 + \frac{G^2 T}{2\eta_K}\sum_{k = 1}^K \eta_k^2 w_{k - 1} \\
    & + \frac{3G}{\eta_K}\sum_{k = 1}^K w_{k - 1}\eta_k \sum_{t = 1}^T \varepsilon_{k - 1, t} + \frac{1}{2T\eta_K}\sum_{k = 1}^K\Big(\sum_{t = 1}^T \varepsilon_{k - 1, t}\Big)^2 w_{k- 1} \\
    = \;& \frac{1}{2T\sum_{k = 1}^K\eta_k}\|\vx_0 - \vx_*\|^2 + \frac{G^2 T}{2}\sum_{k = 1}^K\frac{\eta_k^2}{\sum_{j = k}^K\eta_j} \\
    & + 3G\sum_{k = 1}^K\sum_{t = 1}^T\frac{\varepsilon_{k - 1, t}\eta_k}{\sum_{j = k}^K \eta_j} + \frac{1}{2T}\sum_{k = 1}^K \frac{(\sum_{t = 1}^T\varepsilon_{k - 1, t})^2}{\sum_{j = k}^K\eta_j}.
\end{align*}
Hence, given $\varepsilon > 0$ and taking the constant step size $\eta_k \equiv \eta = \frac{\|\vx_0 - \vx_*\|}{GT\sqrt{K}}$ for simplicity, to maintain the convergence rate as in Theorem~\ref{thm:convergence--prox-IGD-last-nonsmooth} with inexact proximal point evaluations, it suffices to let $\sum_{t = 1}^T \varepsilon_{k - 1, t} \leq \frac{2\eta G T}{K - k + 1}$.
Indeed, we have 
\begin{align*}
    3G\sum_{k = 1}^K\sum_{t = 1}^T\frac{\varepsilon_{k - 1, t}\eta_k}{\sum_{j = k}^K \eta_j} = 3G\sum_{k = 1}^K\frac{\sum_{t = 1}^T\varepsilon_{k - 1, t}}{K - k + 1} \leq \frac{5G\|\vx_0 - \vx_*\|}{\sqrt{K}}, 
\end{align*}
and 
\begin{align*}
    \frac{1}{2T}\sum_{k = 1}^K \frac{(\sum_{t = 1}^T\varepsilon_{k - 1, t})^2}{\sum_{j = k}^K\eta_j} \leq \frac{2G\|\vx_0 - \vx_*\|}{\sqrt{K}}\sum_{k = 1}^K\frac{1}{(K - k + 1)^3} \leq \frac{2.5 G\|\vx_0 - \vx_*\|}{\sqrt{K}}.
\end{align*}
It remains to follow the proof of Theorem~\ref{thm:convergence--prox-IGD-last-nonsmooth}, thus finishing the proof. 
\end{proof}

\end{document}